\newcommand{\eps}{\varepsilon}
\newcommand{\supC}{\mbox{$\sup {\rm C}$}}
\newcommand{\suptwoCC}{\mbox{$\sup {\rm 2cC}$}}
\newtheorem{theorem}{Theorem}
\newtheorem{problem}[theorem]{Problem}
\newtheorem{lem}[theorem]{Lemma}
\newtheorem{corollary}[theorem]{Corollary}
\newtheorem{definition}[theorem]{Definition}
\newtheorem{rem}[theorem]{Remark}
\title{\LARGE \bf Decentralized Supervisory Control with Communicating Supervisors Based on Top-Down Coordination Control}
\author{Jan Komenda and Tom{\' a}{\v s} Masopust
  \thanks{J. Komenda and T. Masopust are with the Institute of Mathematics,
          Academy of Sci. of the Czech Republic,
          {\v Z}i{\v z}kova 22, 616 62 Brno, Czech Rep.
          {\tt\small komenda@math.cas.cz, masopust@math.cas.cz}
  }%
}
\begin{document}

\maketitle
\thispagestyle{empty}
\pagestyle{empty}

\begin{abstract}
  In this paper we present a new approach to decentralized supervisory control of large automata with communicating supervisors. We first generalize the recently developed top-down architecture of multilevel coordination control with a hierarchical structure of groups of subsystems, their respective coordinators and supervisors. Namely, in the case where the equivalent conditions for achieving a specification language fail to be satisfied, we propose sufficient conditions for a distributed computation of the supremal achievable sublanguage. We then apply the obtained constructive results of multilevel coordination control to decentralized supervisory control with communication, where local supervisors of subsystems within a group communicate with each other via the coordinator of the group. Our approach is illustrated by an example.
\end{abstract}

\section{Introduction}
  Decentralized supervisory control has been introduced in~\cite{RW92} aiming at decreasing the computational complexity of supervisory control of large automata without a product structure based on a single (centralized) supervisor. As it turned out that {\em coobservability\/} of a specification language, the necessary condition to achieve the specification in decentralized supervisory control, is too strong, two different approaches have been proposed. The first one is based on communication between supervisors to achieve coobservability with respect to observable alphabets enriched by communicated event occurrences, cf. \cite{BL00} and \cite{RR00}. The second one consists in proposing new and more general decentralized supervisory control architectures that lead to weaker notions of coobservability. The original notion of coobservability~\cite{RW92} has been called conjunctive and permissive (C\,\&\,P), while an alternative architecture called disjunctive and antipermissive (D\,\&\,A) has been proposed in~\cite{YL03} together with their combination. Among weaker concepts of coobservability, one should cite a general architecture combining both architectures~\cite{YL03} or even more general architectures with possible several levels of inferencing leading to conditional versions of coobservability~\cite{YL04}.

  Coordination control \cite{KMvS14} can be seen as a trade-off between a purely local (modular) control synthesis that often fails in achieving sufficiently permissive supervisors and leads to blocking, and a global control synthesis that is too expensive from the computational complexity viewpoint. We have extended coordination control to the multilevel setting with a hierarchical structure of groups of subsystems, their respective coordinators, and supervisors in~\cite{KMvS13}. The top-down approach proposed therein along with the corresponding notions of conditional decomposability and conditional controllability enables to compute the supervisors only at the lowest level. Unlike centralized coordination, supervisors of the subsystems only communicate within groups on the same level of hierarchy via group coordinators, which are located on the next upper level of the hierarchy.

  In~\cite{KM13}, we have proposed a generic approach that consists in applying the results of coordination control of automata with a synchronous-product structure to decentralized control of automata without an explicit product structure. It is based on the over-approximation of the automaton without a product structure by a product of its natural projections to the alphabet of local supervisors. However, a communication structure is not interesting, because all local supervisors communicate with each other via a coordinator.

  In this paper we will benefit from recent results of multilevel coordination control. First, earlier existential results from~\cite{KMvS13} are extended to a construction procedure that computes the least restrictive solution for the top-down architecture of multilevel coordination control. These results are then applied to the original decentralized control problem, which is solved using an underlying communication scheme, where local supervisors of subsystems within a group communicate with each other via a coordinator of the group. The optimal solution obtained by coordination control leads to a (possibly non-optimal) solution of the the original decentralized control problem.

\section{Existential results of multilevel coordination control}\label{existential}
  Basic notions and notational conventions are first recalled. The free monoid of words (strings) over an alphabet (event set) $A$ is denoted by $A^*$. A language is a subset of $A^*$. The prefix closure of a language $L\subseteq A^*$ is the set of all prefixes of all its words and is denoted by $\overline{L}=\{w\in A^* \mid \text{there exists } v\in A^* \text{ such that } wv\in L\}$; $L$ is prefix-closed if $L=\overline{L}$. 
  
  In this paper, any languages under consideration are assumed to be prefix-closed.

  A {\em generator\/} is a quadruple $G=(Q,A,f,q_0)$ consisting of a finite set of {\em states} $Q$, a finite {\em event set }$A$, a {\em partial transition function} $f: Q \times A \to Q$, and an {\em initial state} $q_0 \in Q$. The function $f$ can be extended in the standard way to strings, i.e. $f:Q \times A^*\to Q$. We recall that $L(G) = \{s\in A^* \mid f(q_0,s)\in Q\}$ is called the {\em generated language\/} of $G$. 
  
  A {\em controlled generator\/} over an alphabet $A$ is a triple $(G,A_c,\Gamma)$, where $G$ is a generator over $A$, $A_c\subseteq A$ is a set of {\em controllable events\/}, $A_{u} = A \setminus A_c$ is the set of {\em uncontrollable events\/}, and $\Gamma = \{\gamma \subseteq A \mid A_{u} \subseteq \gamma\}$ is the {\em set of control patterns}. A {\em supervisor\/} for a controlled generator $(G,A_c,\Gamma)$ is a map $S:L(G) \to \Gamma$. The {\em closed-loop system\/} associated with the controlled generator $(G,A_c,\Gamma)$ and the supervisor $S$ is defined as the minimal language $L(S/G)$ such that $\eps\in L(S/G)$ and, for any $s\in L(S/G)$ with $sa\in L(G)$ and $a\in S(s)$, $sa$ belongs to $L(S/G)$.
  
  A language $K\subseteq A^*$ is {\em controllable} with respect to $L$ and $A_u$ if $KA_u\cap L\subseteq K$.

  A {\em projection} $P: A^* \to B^*$, for $B\subseteq A$, is a homomorphism defined as $P(a)=\eps$, for $a\in A\setminus B$, and $P(a)=a$, for $a\in B$. The {\em inverse image} of $P$, denoted by $P^{-1} : B^* \to 2^{A^*}$, is defined as $P^{-1}(w)=\{s\in A^* \mid P(s) = w\}$. These definitions can be extended to languages. For alphabets $A_i$, $A_j$, $A_\ell \subseteq A$, we use $P^{i+j}_{\ell}$ to denote the projection from $(A_i\cup A_j)^*$ to $A_\ell^*$. If $A_i\cup A_j=A$, we simply write $P_\ell$. Moreover, $A_{i,u}=A_i\cap A_u$ denotes the set of locally uncontrollable events. For a generator $G$ and a projection $P$, $P(G)$ denotes the minimal generator such that $L(P(G))=P(L(G))$. The reader is referred to~\cite{CL08} for a construction.

  Let $G$ be a generator over an alphabet $A$. 
  Given a specification $K\subseteq L(G)$, the aim of supervisory control is to find a supervisor $S$ such that $L(S/G)=K$. Such a supervisor exists if and only if $K$ is controllable with respect to $L(G)$ and $A_u$, 
  see~\cite{CL08}.
  
  The synchronous product of languages $L_i\subseteq A_i^*$, for $i=1,\dots ,n$, is defined as $\|_{i=1}^n L_i= \cap_{i=1}^n P_i^{-1}(L_i) \subseteq A^*$, where $A = \cup_{i=1}^n A_i$, and $P_i: A^*\to A_i^*$ are projections to local alphabets. The corresponding synchronous product of generators $G_i$ (see~\cite{CL08} for definition) satisfies $L(\|_{i=1}^n G_i) = \|_{i=1}^n L(G_i)$. 

  A projection $Q:A^* \to B^*$ is an {\em $L$-observer} for a language $L\subseteq A^*$ if, for every $t\in Q(L)$ and $s\in L$, $Q(s)$ is a prefix of $t$ implies that there exists $u\in A^*$ such that $su\in L$ and $Q(su)=t$, cf.~\cite{WW96}. 
  
  We will need the following obvious lemma and results below.
  
  \begin{lem} 
  \label{obvious}
    For any language $L\subseteq A^*$ and projections $P_1: A^* \to B_1^*$ and $P_2: A^* \to B_2^*$ with $B_2\subseteq B_1\subseteq A$, it holds that $P_1(L) \| P_2(L)= P_1(L)$. \hfill\QED
  \end{lem}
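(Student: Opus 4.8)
The plan is to unravel the definitions of synchronous product, inverse projection, and projection, and check the two inclusions of the claimed equality $P_1(L)\|P_2(L)=P_1(L)$. Since $P_1:A^*\to B_1^*$ and $P_2:A^*\to B_2^*$ with $B_2\subseteq B_1$, the synchronous product lives over the alphabet $B_1\cup B_2=B_1$, so $P_1(L)\|P_2(L)=P_1(L)\cap R^{-1}(P_2(L))$, where $R:B_1^*\to B_2^*$ is the projection (note $P_2=R\circ P_1$ since $B_2\subseteq B_1$; this identity is the small observation that makes everything go through). With this in hand, I would first argue the inclusion $P_1(L)\|P_2(L)\subseteq P_1(L)$, which is immediate because the synchronous product is, by definition, an intersection one of whose terms is $P_1(L)$ (formally $\|$ as defined with $P_i^{-1}$, but over $B_1$ the factor corresponding to $B_1$ is just $P_1(L)$ itself).

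For the reverse inclusion $P_1(L)\subseteq P_1(L)\|P_2(L)$, I would take an arbitrary word $w\in P_1(L)$, so $w=P_1(s)$ for some $s\in L$. I then need $w\in R^{-1}(P_2(L))$, i.e. $R(w)\in P_2(L)$. But $R(w)=R(P_1(s))=P_2(s)\in P_2(L)$ using the composition identity $P_2=R\circ P_1$. Hence $w\in P_1(L)\cap R^{-1}(P_2(L))=P_1(L)\|P_2(L)$, which finishes the argument.

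The only genuine subtlety — and it is minor — is matching the paper's convention for $\|$ (defined via $\cup A_i$ and the maps $P_i^{-1}$) to the two-language instance here: one must observe that when one alphabet contains the other, the larger-alphabet factor contributes its language unchanged and the smaller-alphabet factor contributes $R^{-1}(P_2(L))$ over the ambient alphabet $B_1$. Everything else is a one-line manipulation of projections. I expect no real obstacle; the lemma is indeed "obvious" as the authors state, and the proof is essentially the observation $P_2=R\circ P_1$ together with $L\subseteq L$.

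\begin{proof}
Since $B_2\subseteq B_1\subseteq A$, the ambient alphabet of the synchronous product $P_1(L)\|P_2(L)$ is $B_1\cup B_2=B_1$, and $P_1(L)\|P_2(L)=P_1(L)\cap R^{-1}(P_2(L))$, where $R:B_1^*\to B_2^*$ is the projection. Note also that $P_2=R\circ P_1$. The inclusion $P_1(L)\|P_2(L)\subseteq P_1(L)$ is immediate. Conversely, let $w\in P_1(L)$, say $w=P_1(s)$ with $s\in L$. Then $R(w)=R(P_1(s))=P_2(s)\in P_2(L)$, so $w\in R^{-1}(P_2(L))$, hence $w\in P_1(L)\cap R^{-1}(P_2(L))=P_1(L)\|P_2(L)$.
\end{proof}
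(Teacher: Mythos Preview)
Your proof is correct. The paper does not actually give a proof of this lemma: it is stated with a trailing \textsc{qed} symbol and introduced as ``obvious,'' so there is nothing to compare against beyond noting that your argument---reducing to the composition identity $P_2=R\circ P_1$ for $R:B_1^*\to B_2^*$ and reading off both inclusions from $P_1(L)\|P_2(L)=P_1(L)\cap R^{-1}(P_2(L))$---is exactly the intended one-line verification.
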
 

  
  \begin{lem}[Lemma~4.3 in~\cite{FLT}]\label{LeiFeng}
    Let $K\subseteq L$ be controllable with respect to $L$ and $A_u$. 
    If the natural projection $P:A^*\to A_o^*$ is an $L$-observer and OCC for $L$,
    then $P(K)$ is controllable with respect to $P(L)$ and $A_u\cap A_o$. 
    \hfill\QED
  \end{lem}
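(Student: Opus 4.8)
\emph{Proof idea.} The plan is to verify the controllability inclusion $P(K)\,(A_u\cap A_o)\cap P(L)\subseteq P(K)$ directly (note $P(K)$ is prefix-closed, as $K$ is). So fix $t\in P(K)$ and $\sigma\in A_u\cap A_o$ with $t\sigma\in P(L)$; the goal is $t\sigma\in P(K)$. First I would pick $s\in K$ with $P(s)=t$. Since $t\sigma\in P(L)$ and $P(s)=t$ is a prefix of $t\sigma$, the $L$-observer property of $P$ provides $u\in A^*$ with $su\in L$ and $P(su)=t\sigma$; as $P$ is a homomorphism, $t\,P(u)=t\sigma$ forces $P(u)=\sigma$, hence $u=u_1\sigma u_2$ with $u_1,u_2\in(A\setminus A_o)^*$. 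Because all languages here are prefix-closed, it suffices to prove $su_1\sigma\in K$: then $P(su_1\sigma)=P(s)\sigma=t\sigma\in P(K)$, the letters of $u_1$ being unobservable. So I would discard the tail $u_2$ and work with the word $su_1\sigma\in L$.

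The second step uses OCC. Write $s=s_0v$, where $v\in(A\setminus A_o)^*$ is the maximal unobservable suffix of $s$; then $vu_1$ is precisely the maximal unobservable block of $su_1\sigma$ immediately preceding the observable uncontrollable event $\sigma$. The OCC property of $P$ for $L$, applied to $su_1\sigma\in L$, says exactly that such a block contains no controllable event, so $vu_1\in A_u^*$ and in particular $u_1\sigma\in A_u^*$.

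It then remains to propagate controllability of $K$ along this uncontrollable bridge. Writing $u_1\sigma=a_1\cdots a_m$ with each $a_i\in A_u$, a short induction on $j$ shows $sa_1\cdots a_j\in K$ for $j=0,\dots,m$: the base case $j=0$ is $s\in K$, and in the step, $sa_1\cdots a_{j-1}\in K$, $sa_1\cdots a_j\in L$ (a prefix of $su_1\sigma\in L$), and $a_j\in A_u$ give $sa_1\cdots a_j\in KA_u\cap L\subseteq K$ by controllability of $K$ with respect to $L$ and $A_u$. Thus $su_1\sigma\in K$, and projecting gives $t\sigma\in P(K)$, completing the proof.

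The only real subtlety I foresee is aligning the $L$-observer extension with the OCC hypothesis: the $L$-observer only guarantees that \emph{some} unobservable string $u_1$ bridges the chosen $s\in K$ to an occurrence of $\sigma$, whereas OCC is a property of \emph{every} word of $L$, so one must notice that it applies to the concrete word $su_1\sigma$ constructed above and that its conclusion is exactly what makes the controllability propagation go through. Everything else is routine unwinding of definitions plus the induction above, so I would not expect further obstacles.
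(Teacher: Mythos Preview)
The paper does not supply its own proof of this lemma: it is quoted verbatim from~\cite{FLT} and closed with a \QED{} immediately after the statement. So there is no in-paper argument to compare against.

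Your argument is correct and is essentially the standard proof of this fact. The only point worth tightening is the appeal to OCC: you should state explicitly which form of OCC you use, namely that for any word of $L$ ending in an event of $A_u\cap A_o$, the maximal block of unobservable events immediately preceding that final event lies in $A_u^*$. Applied to $su_1\sigma\in L$ this gives $vu_1\in A_u^*$, hence $u_1\in A_u^*$, which is all you need for the controllability induction; the remaining steps (observer lift, prefix-closure to drop $u_2$, and the one-letter-at-a-time controllability push) are unproblematic.
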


  \begin{lem}[Proposition~4.6 in \cite{FLT}]\label{feng}
    For $i=1,\dots ,n$, let $K_i\subseteq L_i$ be controllable with respect to $L_i\subseteq A_i^*$ and $A_{i,u}$, then $\|_{i=1}^n K_i$ is controllable with respect to $\|_{i=1}^n L_i$ and $\cup_{i=1}^n A_{i,u}$. \hfill\QED
  \end{lem}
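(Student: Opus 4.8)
\emph{Proof plan.} The plan is to unfold the definitions of the synchronous product and of controllability and to verify the single inclusion $(\|_{i=1}^n K_i)\,A_u \cap (\|_{i=1}^n L_i) \subseteq \|_{i=1}^n K_i$ directly, where $A_u = \cup_{i=1}^n A_{i,u}$. Write $K = \|_{i=1}^n K_i = \cap_{i=1}^n P_i^{-1}(K_i)$ and $L = \|_{i=1}^n L_i = \cap_{i=1}^n P_i^{-1}(L_i)$, where $A = \cup_{i=1}^n A_i$ and $P_i : A^* \to A_i^*$ is the projection onto the local alphabet. Pick an arbitrary $s \in K$, an uncontrollable event $a \in A_u$, and assume $sa \in L$; the goal is to show $sa \in K$, i.e. $P_i(sa) \in K_i$ for every $i$.

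The key step is a case split on whether $a$ occurs in the local alphabet $A_i$. If $a \notin A_i$, then $P_i$ erases $a$, so $P_i(sa) = P_i(s)$, and since $s \in K \subseteq P_i^{-1}(K_i)$ we already have $P_i(s) \in K_i$; hence $P_i(sa) \in K_i$ trivially. If $a \in A_i$, then, using the standing convention that the controllability status of a shared event agrees across components (so that $a \in A_u$ and $a \in A_i$ force $a \in A_{i,u}$), we observe three facts: $P_i(s) \in K_i$ because $s \in K$; $P_i(sa) = P_i(s)\,a \in L_i$ because $sa \in L \subseteq P_i^{-1}(L_i)$ and $P_i$ is a homomorphism; and $a \in A_{i,u}$. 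Controllability of $K_i$ with respect to $L_i$ and $A_{i,u}$, that is $K_i A_{i,u} \cap L_i \subseteq K_i$, then gives $P_i(s)\,a \in K_i$, i.e. $P_i(sa) \in K_i$.

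Combining the two cases, $P_i(sa) \in K_i$ for every $i = 1,\dots,n$, so $sa \in \cap_{i=1}^n P_i^{-1}(K_i) = K$, which is precisely the controllability inclusion we wanted; prefix-closedness, assumed throughout, plays no role beyond ensuring $K$ and $L$ are themselves prefix-closed. The whole argument is essentially bookkeeping with projections, and the only delicate point I would flag is the implication $a \in A_u \cap A_i \Rightarrow a \in A_{i,u}$ used in the second case. This is innocuous under the usual assumption of a globally consistent partition of $A$ into controllable and uncontrollable events (equivalently, $A_{i,u} = A_i \cap A_u$ for one global $A_u$), which is the setting of the cited Proposition~4.6; if instead an event were allowed to be uncontrollable in one component and controllable in another, one would have to either weaken the conclusion to an intersection-type uncontrollable set or state this consistency as an explicit hypothesis.
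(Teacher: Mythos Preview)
Your argument is correct. The paper itself does not prove this lemma; it is quoted from \cite{FLT} and marked with a \textsc{qed} symbol, so there is no in-paper proof to compare against. Your direct verification of the inclusion $(\|_i K_i)A_u \cap (\|_i L_i) \subseteq \|_i K_i$ via the case split on $a \in A_i$ versus $a \notin A_i$ is the standard elementary argument and is exactly what one would expect.

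One minor remark on the point you flag as delicate: in this paper the implication $a \in A_u \cap A_i \Rightarrow a \in A_{i,u}$ is not an extra assumption but holds by definition, since the paper sets $A_{i,u} = A_i \cap A_u$ for a single global uncontrollable set $A_u$ (see the preliminaries in Section~\ref{existential}). So your second case goes through unconditionally in this setting, and the caveat you raise about components with inconsistent controllability status, while a valid general observation, does not arise here.
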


  Now we recall the basic notions of coordination control~\cite{KMvS14}. 
  A language $K$ over $\cup_{i=1}^{n}A_i$ is {\em conditionally decomposable with respect to alphabets $(A_i)_{i=1}^{n}$ and $A_k$}, where $\cup_{1\le i,j\le n}^{i\neq j} (A_i\cap A_j) \subseteq A_k\subseteq \cup_{i=1}^{n} A_j$, if 
  \[
    K =\ \parallel_{i=1}^{n} P_{i+k} (K)\,,
  \]
  for projections $P_{i+k}$ from $\cup_{j=1}^{n} A_j$ to $A_i\cup A_k$, for $i=1,\ldots,n$.
  Alphabet $A_k$ is referred to as a coordinator alphabet and satisfies the {\em conditional independence} property, that is, $A_k$ includes all shared events: $\cup_{1\le i,j\le n}^{i\neq j} (A_i\cap A_j) \subseteq A_k$. 
  This has the following well known impact.
  \begin{lem}[\cite{FLT}]\label{lemma:Wonham}
    Let $P_k : A^*\to A_k^*$ be a projection, and let $L_i$ be a language over $A_i$, for $i=1,\dots ,n$, and $\cup_{1\le i,j\le n}^{i\neq j} (A_i\cap A_j) \subseteq A_k$. Then $P_k(\|_{i=1}^n L_i)=\|_{i=1}^n P_k(L_i)$.
    \hfill\QED
  \end{lem}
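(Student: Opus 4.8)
\smallskip
\noindent\textit{Proof sketch.}
The plan is to establish the two inclusions separately, writing $R_i\colon A_k^*\to(A_i\cap A_k)^*$ for the projections so that $\parallel_{i=1}^{n} P_k(L_i)=\cap_{i=1}^{n} R_i^{-1}(P_k(L_i))$, and using repeatedly that $R_i\circ P_k$ and $P_k\circ P_i$ both coincide with the projection of $A^*$ onto $(A_i\cap A_k)^*$ (because $A_i\cap A_k\subseteq A_i$ and $A_i\cap A_k\subseteq A_k$). Note that this bookkeeping does not yet use the hypothesis $\cup_{1\le i,j\le n}^{i\neq j}(A_i\cap A_j)\subseteq A_k$.

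For the inclusion ``$\subseteq$'' I would take $s\in\parallel_{i=1}^{n} L_i$, so that $P_i(s)\in L_i$ for every $i$; then $R_i(P_k(s))=P_k(P_i(s))\in P_k(L_i)$ for every $i$, hence $P_k(s)\in\cap_{i=1}^{n} R_i^{-1}(P_k(L_i))=\parallel_{i=1}^{n} P_k(L_i)$. This direction is immediate and needs no assumption on $A_k$.

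The substance of the lemma is the reverse inclusion, and here the conditional independence enters decisively: since $A_i\cap A_j\subseteq A_k$ for all $i\neq j$, the private alphabets $A_i\setminus A_k$ are pairwise disjoint and, moreover, $(A_i\setminus A_k)\cap A_j=\emptyset$ whenever $j\neq i$. I would fix $w\in\parallel_{i=1}^{n} P_k(L_i)$ and, for each $i$, choose $t_i\in L_i$ whose projection onto $A_i\cap A_k$ is $R_i(w)$ (possible because $R_i(w)\in P_k(L_i)$). Writing $w=a_1a_2\cdots a_m$ with all $a_\ell\in A_k$, each $t_i$ factors uniquely as $t_i=u_{i,0}\,b_{i,1}\,u_{i,1}\,b_{i,2}\cdots b_{i,m_i}\,u_{i,m_i}$ with $u_{i,\ell}\in(A_i\setminus A_k)^*$ and $b_{i,\ell}\in A_i\cap A_k$, and $b_{i,1}\cdots b_{i,m_i}$ is precisely the subsequence of $a_1\cdots a_m$ formed by the letters lying in $A_i$ (since that subsequence is $R_i(w)$ and all letters of $w$ lie in $A_k$). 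I would then set $s=v_0\,a_1\,v_1\,a_2\cdots a_m\,v_m$, where each gap $v_\ell$ is the concatenation, taken in a fixed order of the indices, of the private blocks $u_{i,\cdot}$ of the components $i$ that must ``catch up'' right after the $\ell$-th shared letter: $v_0=u_{1,0}u_{2,0}\cdots u_{n,0}$ and, for $\ell\ge1$, $v_\ell$ collects $u_{i,r}$ for those $i$ with $a_\ell\in A_i$, where $r$ is the rank of $\ell$ among the positions of $w$ whose letter lies in $A_i$. Because $P_k$ erases every letter of each $A_i\setminus A_k$ while keeping every $a_\ell$, one obtains $P_k(s)=w$; and because $P_i$ erases the private blocks of every $j\neq i$ (as $(A_j\setminus A_k)\cap A_i=\emptyset$) while reassembling those of component $i$ in the right order around the letters $a_\ell\in A_i$, one obtains $P_i(s)=t_i\in L_i$ for every $i$. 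Hence $s\in\parallel_{i=1}^{n} L_i$ with $P_k(s)=w$, so $w\in P_k(\parallel_{i=1}^{n} L_i)$, which completes the proof.

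I expect the \emph{main obstacle} to be making the interleaving rigorous: introducing the strictly increasing index maps locating the letters of each $A_i$ inside $w$, defining the gaps $v_\ell$ in terms of them, and then verifying the two identities $P_i(s)=t_i$ and $P_k(s)=w$ cleanly (most naturally by induction on the length of $w$). An alternative is to induct on $n$, using the grouping $(\parallel_{i=1}^{n-1} L_i)\parallel L_n$ together with $(\cup_{i=1}^{n-1}A_i)\cap A_n\subseteq A_k$ to reduce to the two-language case; but the two-language case still requires exactly the same shuffle argument, so I would present the direct construction above.
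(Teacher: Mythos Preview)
Your argument is correct. The forward inclusion is routine, and your interleaving construction for the reverse inclusion is the standard one; the key point---that the private alphabets $A_i\setminus A_k$ are pairwise disjoint and in fact disjoint from every other $A_j$---is exactly where conditional independence $\cup_{i\neq j}(A_i\cap A_j)\subseteq A_k$ is used, and you identify this clearly. The bookkeeping you anticipate (index maps locating the $A_i$-letters inside $w$, verifying $P_i(s)=t_i$) is tedious but straightforward once the blocks $v_\ell$ are defined as you describe.

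There is nothing to compare against in the paper: the lemma is quoted from \cite{FLT} and stated without proof (the \hfill\QED\ simply closes the quotation). So your write-up supplies what the paper omits. If you want to shorten it, the inductive reduction to $n=2$ that you mention at the end is a reasonable alternative presentation, but as you note it does not avoid the shuffle; the direct construction you give is the cleaner choice.
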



  The idea of coordination control is to first construct a supervisor $S_k$ such that the closed-loop system $L(S_k/G_k)$ satisfies the "coordinator part" of the specification given by $P_k(K)$ and then local supervisors $S_i$ for plants $G_i \| (S_k/G_k)$, for $i=1,\dots ,n$, such that the closed-loop system $L(S_i/ [G_i \| (S_k/G_k) ])$ satisfies the corresponding part of the specification given by $P_{i+k}(K)$. Conditional controllability along with conditional decomposability form an equivalent condition for a language to be achieved by the closed-loop system within our coordination control architecture, see below.
  
  A language $K\subseteq L(G_1\| \dots \|G_n \| G_k)$ is {\em conditionally controllable\/} for generators $G_1,\dots,G_n$ and a coordinator $G_k$ and uncontrollable alphabets $A_{1,u},\dots ,A_{2,u}$, and $A_{k,u}$ if 
  (1) $P_k(K)$ is controllable with respect to $L(G_k)$ and $A_{k,u}$ and
  (2) $P_{i+k}(K)$ is controllable with respect to $L(G_i) \parallel P_k(K)$ and $A_{i+k,u}=A_{i+k,u}=(A_i\cup A_k)\cap A_u$, for $i=1,\dots,n$.

  Recall that every conditionally controllable and conditionally decomposable language is controllable~\cite{KMvS14}. The main existential result of~\cite{KMvS14} states that for a specification $K\subseteq A^*$ that is conditionally decomposable, there exist supervisors $S_1, \dots ,S_n$, and $S_k$ such that 
  $\parallel_{i=1}^n L(S_i/[G_i \| (S_k/G_k)]) =  K$ if and only if $K$ is conditionally controllable.

\section{Constructive results of two-level coordination control}\label{sec:sup2cc}
  In this section we assume that $G=G_1 \| G_2 \|\dots \|G_n$ and that the subsystems are organized into $m$ groups $I_j$, for $j=1,\dots ,m$. 
  The notation \[A_{I_r}=\bigcup\nolimits_{i\in I_r} A_i\] is used in the paper. Here $P_{I_r}$ denotes the projection $P_{I_r}:A^*\to A_{I_r}^*$. The notation for projection to extended group events $P_{I_r+k}: A^*\to (A_k\cup A_{I_r})^*$ should be self-explanatory. We have introduced the corresponding notion of conditional decomposability in~\cite{KMvS13}.

  \begin{definition}[Two-level conditional decomposability]\label{mcd}$ $\\
    A language $K\subseteq A^*$ is called {\em two-level conditionally decomposable\/} with respect to alphabets $A_1,\ldots,A_n$, high-level coordinator alphabet $A_{k}$, and low-level coordinator alphabets $A_{k_1},\dots A_{k_m}$ if
    \begin{align*} 
      K = \parallel_{r=1}^m P_{I_r+k} (K) && \text{ and } && P_{I_r+k} (K) &= \parallel_{j\in I_r} P_{j+k_r+k} (K)\,
    \end{align*}
    for $r=1,\dots,m$.
    \hfill\QEDopen
  \end{definition} 
  
  \begin{rem} \label{simple} 
    Unlike the original approach in~\cite{KMvS13} we propose the following simplification. Instead of using both low-level coordinator alphabets $A_{k_j}$, $j=1, \dots, m$, and high-level coordinator alphabet $A_{k}$, we will only use low-level coordinator alphabets $A_{k_j}$, $j=1, \dots, m$. We recall that $A_{k}$ contains only events shared between different groups of subsystems, i.e. $A_k\supseteq \bigcup\nolimits_{k,\ell\in\{1, \dots ,m\}}^{k\not =l} (A_{I_k}\cap A_{I_\ell})$, which is typically a much smaller set than the set of shared events (between two or more subsystems). Thus, for $j=1, \dots, m$, we define new low-level coordinator alphabets $A_{k_j}:=A_k\cup A_{k_j}$ by putting into the alphabets of group coordinators $A_{k_j}$ also events from the global coordinator set (if this is nonempty). This is possible, because only prefix-closed languages are used, which means that no high-level coordinators for nonblocking are needed. We recall that in the  prefix-closed case the coordinators are actually determined by the corresponding alphabets from Definition~\ref{mcd} as a projection of the plant to these alphabets. This simplification is used because this paper is technically involved.
 \end{rem} 
 
 \begin{problem}[Two-level coordination control problem]
  \label{problem:m-controlsynthesis} 
    Let generators $G_1,\dots,G_n$ be over alphabets $A_1,\dots,A_n$, respectively, and consider the modular system as their synchronous product $G=G_1\|\dots \| G_n$ along with the two-level hierarchical structure of subsystems organized into groups $I_j$, $j=1,\dots ,m\leq n$, on the low level. The synchronous products $\parallel_{i\in I_j} G_i$, $j=1,\dots ,m$, then represent the $m$ high-level systems. 
    Coordinators $G_{k_j}$ are associated to groups of subsystems $\{G_i \mid i\in I_j\}$, $j=1,\dots, m$.  The two-level coordination control problem consists in synthesizing supervisors $S_i$ for any low-level systems $G_i$, $i=1,\ldots,n$, and higher-level supervisors $S_{k_j}$ supervising the group coordinator $G_{k_j}$, $j=1,\dots, m$, such that the specification is met by the closed-loop system. 
    Then the two-level coordinated and supervised closed-loop system is 
    \begin{flalign*}
      && \parallel_{j=1}^m  \parallel_{i\in I_j} L(S_i/  [G_i\parallel (S_{k_j}/G_{k_j})])\,. && \triangleleft
    \end{flalign*}
  \end{problem}
  \medskip

  For a specification $K$, the coordinator $G_{k_j}$ of the $j$-th group of subsystems $\{G_i \mid i\in I_j\}$ is computed as follows.
  \begin{enumerate}
    \item Set $A_{k_j} = \bigcup_{k,\ell\in I_j}^{k\neq \ell} (A_k\cap A_\ell)$ to be the set of all shared events of systems from the group $I_j$.
    \item Extend $A_{k_j}$ so that $P_{I_r+k}(K)$ is conditional decomposable with respect to $(A_i)_{i\in I_j}$ and $A_{k_j}$, for instance using a method described in~\cite{SCL12}.
    \item Let coordinator $G_{k_j}= \|_{i= 1}^{n} P_{k_j}(G_i)$.
  \end{enumerate}
  
  With the definition that $A_k\subseteq A_{k_j}$ described in Remark~\ref{simple}, we can simplify $L(G_k) \| L(G_{k_j})$ of~\cite{KMvS13} to $L(G_{k_j})$. Indeed, by our choice of coordinators $L(G_k) \| L(G_{k_j}) = P_k(L) \| P_{k_j}(L) = P_{k_j}(L) = L(G_{k_j})$. Therefore, instead of both low-level coordinators $G_{k_j}$, $j=1,\dots ,m$, for subsystems belonging to individual groups $\{G_i \mid i\in I_j\}$ and high-level coordinators $G_k$ that coordinate the different groups, we are using only low-level (group) coordinators $G_{k_j}$, but over larger alphabets compared to~\cite{KMvS13}. 

  
  Since the only known condition ensuring that the projected generator is smaller than the original one is the observer property~\cite{WW96} we might need to further extend alphabets $A_{k_j}$ so that projection $P_{k_j}$ is an $L(G_i)$-observer, for all $i\in I_j$. 
  
  We assume that the specification is prefix-closed, hence the blocking issue is not considered in this paper. Blocking can be handled using coordinators for nonblockingness studied in~\cite{KMvS14}.

  The key concept is the following.
  \begin{definition}[\cite{KMvS13}]\label{def:2-conditionalcontrollability}
    Consider the setting and notations of Problem~\ref{problem:m-controlsynthesis}, and let $G_k$ be a coordinator. A language $K\subseteq L(\|_{i=1}^{n} G_i)$ is {\em two-level conditionally controllable\/} with respect to generators $G_1,\dots,G_n$, local alphabets $A_1,\ldots,A_n$, low-level coordinator alphabets $A_{k_1},\dots A_{k_m}$, and uncontrollable alphabet $A_{u}$ if
    \begin{enumerate}
      \item\label{cc1} $P_{k_j}(K)$ is controllable with respect to  $L(G_{k_j})$ and $A_{k_j,u}$,
      \item\label{cc2} for $j=1,\dots,m$ and $i\in I_j$, $P_{i+k_j}(K)$ is controllable with respect to  $L(G_i) \parallel P_{k_j}(K)$ and $A_{i+k_j,u}$.
      \hfill\QEDopen
    \end{enumerate}
  \end{definition}
  
  Note that we have simplified the original version of two-level conditional controllability from \cite{KMvS13} by replacing the composition $L(G_k) \| L(G_{k_j})$ by $L(G_{k_j})$ as discussed in Remark \ref{simple}. For a future reference we will say that $K$ is two-level conditionally controllable with respect to $G_i$, $i\in I_l$, and $G_{k_l}$, i.e. several items are omitted (but should be clear from the problem formulation).
 
  The following lemma shows how to construct a two-level conditional controllable language as the synchronous composition of conditionally controllable languages for groups. 
  \begin{lem}\label{cc2cc}
    For all $l=1,\dots ,m$, let the languages $M_l\subseteq A_{I_l}^*$ be conditionally controllable with respect to $G_i$, $i\in I_l$, and $G_{k_l}$, and conditionally decomposable with respect to alphabets $A_i$, $i\in I_l$, and $A_{k_l}$, and $A_{k_l} \supseteq A_k \supseteq \bigcup\nolimits_{k\neq \ell} (A_{I_k}\cap A_{I_\ell})$. If for all $l=1,\dots ,m$, $P_k^{k_l}$ is a $L_{k_l}$-observer and OCC for $P_{k_l}(M_l)$, then $\parallel_{l=1}^m M_l$ is two-level conditionally controllable with respect $G_i$, $i\in I_l$, and $G_{k_l}$, $l=1,\dots ,m$.
  \end{lem}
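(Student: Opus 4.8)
\medskip
\noindent The plan is to verify, for $K:=\parallel_{l=1}^{m}M_l$, the two items of Definition~\ref{def:2-conditionalcontrollability}. First I would fix $j$ and record the alphabet relations that make the argument work: since, by Remark~\ref{simple}, $A_{k_j}$ is made of events of the group $I_j$ together with $A_k$, and $A_k$ contains every event shared between two distinct groups, we have $A_k\subseteq A_{k_j}$ and, for all $l\neq j$ and $i\in I_j$, $(A_i\cup A_{k_j})\cap A_{I_l}\subseteq A_k$ (in particular $A_{k_j}\cap A_{I_l}=A_k\cap A_{I_l}$). Writing $K=M_j\parallel(\parallel_{l\neq j}M_l)$ and noting that the events shared between $M_j$, over $A_{I_j}$, and $\parallel_{l\neq j}M_l$, over $\bigcup_{l\neq j}A_{I_l}$, lie in $A_k\subseteq A_{k_j}\subseteq A_i\cup A_{k_j}$, two applications of Lemma~\ref{lemma:Wonham} (one to split off $M_j$, one inside the remaining product) together with the inclusions above give
\[
  P_{k_j}(K)=P_{k_j}(M_j)\parallel\parallel_{l\neq j}P_{k}(M_l)\,,\qquad P_{i+k_j}(K)=P_{i+k_j}(M_j)\parallel\parallel_{l\neq j}P_{k}(M_l)\,,
\]
where the factors $P_k(M_l)$ appear because projecting $\parallel_{l\neq j}M_l$ onto $A_{k_j}$, resp.\ $A_i\cup A_{k_j}$, retains only its events in $A_k$.

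For condition~(\ref{cc2}), conditional controllability of $M_j$ with respect to $G_i$, $i\in I_j$, and $G_{k_j}$ gives that $P_{i+k_j}(M_j)$ is controllable with respect to $L(G_i)\parallel P_{k_j}(M_j)$ and $A_{i+k_j,u}$, while each $P_k(M_l)$ is trivially controllable with respect to itself and $A_{k,u}\subseteq A_{i+k_j,u}$. Lemma~\ref{feng} then makes $P_{i+k_j}(M_j)\parallel\parallel_{l\neq j}P_k(M_l)$ controllable with respect to $\bigl(L(G_i)\parallel P_{k_j}(M_j)\bigr)\parallel\parallel_{l\neq j}P_k(M_l)$ and $A_{i+k_j,u}$, which by the two displayed identities is exactly condition~(\ref{cc2}).

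For condition~(\ref{cc1}) the same scheme applies, but here the observer/OCC hypothesis is needed. Conditional controllability of $M_j$ gives $P_{k_j}(M_j)$ controllable with respect to $L(G_{k_j})$ and $A_{k_j,u}$; for $l\neq j$, conditional controllability of $M_l$ gives $P_{k_l}(M_l)$ controllable with respect to $L(G_{k_l})$ and $A_{k_l,u}$, and since $P_k^{k_l}$ is an $L(G_{k_l})$-observer and OCC, Lemma~\ref{LeiFeng} makes $P_k(M_l)=P_k^{k_l}(P_{k_l}(M_l))$ controllable with respect to $P_k(L(G_{k_l}))$ and $A_{k_l,u}\cap A_k=A_{k,u}$. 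Recalling $L(G_{k_l})=P_{k_l}(L(G))$ and $A_k\subseteq A_{k_l}$ one obtains $P_k(L(G_{k_l}))=P_k(L(G))$, so Lemma~\ref{feng} (again using $A_{k,u}\subseteq A_{k_j,u}$) makes $P_{k_j}(M_j)\parallel\parallel_{l\neq j}P_k(M_l)$ controllable with respect to $L(G_{k_j})\parallel\parallel_{l\neq j}P_k(L(G))$ and $A_{k_j,u}$. Finally, recalling $L(G_{k_j})=P_{k_j}(L(G))$ with $A_k\subseteq A_{k_j}$, Lemma~\ref{obvious} collapses the plant $L(G_{k_j})\parallel\parallel_{l\neq j}P_k(L(G))$ to $L(G_{k_j})$, and the first displayed identity turns this into condition~(\ref{cc1}). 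As $j$ was arbitrary, $K$ is two-level conditionally controllable.

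I expect the main obstacle to be the alphabet bookkeeping of the first paragraph --- proving $(A_i\cup A_{k_j})\cap A_{I_l}\subseteq A_k$ so that Lemma~\ref{lemma:Wonham} may legitimately split off the factors $M_l$, $l\neq j$ --- together with the identifications $L(G_{k_j})=P_{k_j}(L(G))$ and $P_k(L(G_{k_l}))=P_k(L(G))$ coming from the coordinator construction of Remark~\ref{simple}, which are precisely what let Lemma~\ref{obvious} discard the superfluous coordinator factors; the remainder is a routine chaining of Lemmas~\ref{LeiFeng} and~\ref{feng}, where one only has to check each time that the uncontrollable alphabet delivered by Lemma~\ref{feng} reduces to $A_{i+k_j,u}$, resp.\ $A_{k_j,u}$, which is immediate from $A_k\subseteq A_{k_j}$.
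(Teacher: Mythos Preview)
Your proposal is correct and follows essentially the same route as the paper's proof: both arguments distribute $P_{k_j}$ and $P_{i+k_j}$ over $\parallel_{l}M_l$ via Lemma~\ref{lemma:Wonham}, identify $P_{k_j}(M_l)=P_{i+k_j}(M_l)=P_k(M_l)$ for $l\neq j$ from the alphabet inclusion $A_{k_j}\subseteq A_{I_j}\cup A_k$, handle the $l=j$ factor by conditional controllability of $M_j$, invoke Lemma~\ref{LeiFeng} under the observer/OCC hypothesis for the $l\neq j$ factors in item~\ref{cc1} (and triviality of controllability with respect to oneself in item~\ref{cc2}), combine via Lemma~\ref{feng}, and collapse the resulting plant with Lemma~\ref{obvious}. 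The only cosmetic differences are that you treat item~\ref{cc2} first and record the alphabet relations up front, whereas the paper derives them in-line; your use of $L(G_{k_l})=P_{k_l}(L(G))$ versus the paper's $L(G_{k_l})=\parallel_{i=1}^n P_{k_l}(L(G_i))$ is immaterial since both are identified in Remark~\ref{simple} and the paragraph following it.
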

  \begin{proof}
    First of all, note that $M = \|_{l=1}^m M_l$ is two-level conditionally decomposable. Indeed, $M=\|_{j=1}^m P_{I_j+k}(M)$, for $A_k=\emptyset$, (and hence for any $A_k\supseteq \emptyset$ as well). 
    
    For item~\ref{cc1} of the definition, we have, by Lemma \ref{lemma:Wonham}, $P_{k_j}(\|_{l=1}^m M_l)=\|_{l=1}^m P_{k_j}(M_l)$, because $A_{k_j}\supseteq A_k \supseteq \cup_{k\neq \ell} (A_{I_k}\cap A_{I_\ell})$.
    We obtain
    $\|_{l=1}^m P_{k_j}(M_l)=P_{k_j}(M_j)\parallel \|_{l=1,\dots ,m}^{l\neq j} P_{k_j}(M_l)$.
    Now, $P_{k_j}(M_j)$ is controllable with respect to $L(G_{k_j})$ and $A_{k_j,u}$
    (by the assumption that $M_j$ is conditionally controllable with respect to 
    $G_i$, $i\in I_j$, and $G_{k_j}$). Furthermore,
    for any $l=1, \dots ,m$, $l\neq j$, $P_{k_j}(M_l)=P_k(M_l)$, because 
    $M_l \subseteq A_{I_l}^*$, 
    $A_{k_j} \subseteq A_{I_j}\cup A_k$, 
    $A_{I_j} \cap A_{I_l}\subseteq A_k$, whence
    $A_{k_j}\cap A_{I_l}\subseteq A_k$. 
    Note that 
    $P_k(M_l)=P_kP_{k_l}(M_l)$.
    Since $M_l$, $l\neq j$, are conditionally controllable with respect to 
    $G_i$, $i\in I_l$, and $G_{k_l}$, we have that $P_{k_l}(M_l)$ is controllable with respect to $L(G_{k_l})$ and $A_{k_l,u}$. By Lemma~\ref{LeiFeng} and observer and OCC
assumptions, it follows that
    $P_kP_{k_l}(M_l)=P_k(M_l)=P_{k_j}(M_l)$ is controllable with respect to $P_kL(G_{k_l})$ and $A_{k,u}=A_k\cap A_{k_l,u}$. 
    It follows by Lemma~\ref{feng} that
    $\|_{l=1}^m P_{k_j}(M_l)=P_{k_j}(M_j)\parallel \|_{l=1,\dots ,m}^{l\neq j} P_{k_j}(M_l)$
    is controllable with respect to $L(G_{k_j})\parallel \|_{l=1,\dots ,m}^{l\neq j} P_kL(G_{k_l})$ and $A_{k_j,u}$.
    We recall at this point that
    $L(G_{k_l})= \|_{i=1}^{n} P_{k_l}(L(G_i))$, hence
    $P_kL(G_{k_l}) = \|_{i=1}^{n} P_{k}(L(G_i))$.
    It follows from $A_k\subseteq A_{k_j}$ and Lemma~\ref{obvious} that for any $l=1,\dots ,m$, $l\neq j$,
    we have $L(G_{k_j}) \parallel P_kL(G_{k_l}) = \|_{i=1}^{n} P_{k_j}(L(G_i)) \parallel \|_{i=1}^{n} P_{k}P_{k_j}(L(G_i)) =  \|_{i=1}^{n} P_{k_j}(L(G_i)) = L(G_{k_j})$.
    Therefore, $P_{k_j}(\|_{j=1}^m M_j)$ is controllable with respect to $L(G_{k_j})$ and $A_{k_j,u}$, which was to be shown for item~\ref{cc1} of the two-level conditional controllability.

    For item~\ref{cc2} we have to show that for all $j=1, \dots, m$
    and for all $i\in I_j$,
    $P_{i+k_j}(\|_{l=1}^m M_l)$ is controllable with respect to 
    $L(G_i)\|P_{k_j}(\|_{l=1}^m M_l)$ and $A_{i+k_j,u}$.
    First of all,
    $P_{i+k_j}(\|_{l=1}^m M_l)=\|_{l=1}^m P_{i+k_j}(M_l)$ by
    the same reason as for item~\ref{cc1}: we project to the event set
    $A_{i+k_j}$ that is even larger than $A_{k_j}$.
    Now,
    $\|_{l=1}^m P_{i+k_j}(M_l)=P_{i+k_j}(M_j)\parallel 
    \|_{l=1,\dots ,m}^{l\neq j}P_{i+k_j}(M_l)$.
    Since $M_j$ is conditionally controllable (for group $I_j$),
    by item 2), $P_{i+k_j}(M_j)$ is controllable with respect to  $L(G_i) \| P_{k_j}(M_j)$
    and $A_{i+k_j,u}$.
    Obviously, $\|_{l=1,\dots ,m}^{l\neq j}P_{i+k_j}(M_l)$ is
    controllable with respect to itself and corresponding uncontrollable event set.
    Altogether we obtain from Lemma~\ref{feng} that
    $\|_{l=1}^m P_{i+k_j}(M_l) = P_{i+k_j}(M_j) \parallel \|_{l=1,\dots ,m}^{l\neq j} P_{i+k_j}(M_l)$ is
    controllable with respect to $L(G_i) \| P_{k_j}(M_j) \parallel \|_{l=1,\dots ,m}^{l\neq j} P_{i+k_j}(M_l) 
    = L(G_i) \parallel P_{k_j} (\|_{l=1}^m M_l)$, which completes the proof.
  \end{proof}

  The main existential result of multilevel coordination control is now recalled from \cite{KMvS13}. 
  
  \begin{theorem}\label{th:2-controlsynthesissafety}
    Consider the setting of Problem~\ref{problem:m-controlsynthesis}  (in particular $K$ is two-level conditionally decomposable  with respect to local alphabets $A_1,\ldots,A_n$, high-level coordinator alphabet $A_{k}$, and low-level coordinator alphabets $A_{k_1},\dots A_{k_m}$). There exist supervisors for low-level systems $S_i$, $i\in I_j$, within any group of low-level systems $\{G_i \mid i\in I_j\}$, $j=1,\dots, m$, and supervisors $S_{k_j}$, $j=1,\dots, m$, for low-level coordinators such that 
    \begin{align}\label{eq:controlsynthesissafety}
      \parallel_{j=1}^m  \parallel_{i\in I_j} L(S_i/  G_i\parallel (S_{k_j}/ G_{k_j}))=  K
    \end{align}
    if and only if $K$ is two-level conditionally controllable as defined in Definition~\ref{def:2-conditionalcontrollability}.
    \hfill\QED
  \end{theorem}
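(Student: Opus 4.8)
The plan is to prove the equivalence in the two usual directions: for the ``if'' part I would construct the supervisors explicitly from the two items of two-level conditional controllability, and for the ``only if'' part I would read those two items off the closed-loop languages that any admissible family of supervisors automatically enjoys, using the structural lemmas collected in Section~\ref{existential}.

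\emph{Sufficiency.} Assume $K$ is two-level conditionally controllable and fix a group $j$. By item~\ref{cc1}, $P_{k_j}(K)$ is controllable with respect to $L(G_{k_j})$ and $A_{k_j,u}$, and $P_{k_j}(K)\subseteq P_{k_j}(L(G))\subseteq L(G_{k_j})$ since $G_{k_j}=\parallel_{i=1}^{n}P_{k_j}(G_i)$; hence the basic supervisory control existence result recalled in Section~\ref{existential} yields a supervisor $S_{k_j}$ with $L(S_{k_j}/G_{k_j})=P_{k_j}(K)$. For $i\in I_j$, the plant $G_i\parallel(S_{k_j}/G_{k_j})$ generates $L(G_i)\parallel P_{k_j}(K)$, which contains $P_{i+k_j}(K)$ because $P_i(P_{i+k_j}(K))=P_i(K)\subseteq L(G_i)$ and $P_{k_j}(P_{i+k_j}(K))=P_{k_j}(K)$, so item~\ref{cc2} yields a supervisor $S_i$ with $L(S_i/[G_i\parallel(S_{k_j}/G_{k_j})])=P_{i+k_j}(K)$. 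The resulting closed loop then equals $\parallel_{j=1}^{m}\parallel_{i\in I_j}P_{i+k_j}(K)$; by the inner identity of two-level conditional decomposability (in the simplified form of Remark~\ref{simple}, where $A_k\subseteq A_{k_j}$ collapses $P_{j+k_r+k}$ to $P_{j+k_r}$) this equals $\parallel_{j=1}^{m}P_{I_j+k}(K)$, which by the outer identity equals $K$, establishing \eqref{eq:controlsynthesissafety}.

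\emph{Necessity.} Suppose supervisors $S_i$ and $S_{k_j}$ satisfy \eqref{eq:controlsynthesissafety}, and put $L_j:=L(S_{k_j}/G_{k_j})$ and $M_i:=L(S_i/[G_i\parallel(S_{k_j}/G_{k_j})])$ for $i\in I_j$. Every closed-loop language is controllable with respect to its plant and the uncontrollable events, so $L_j$ is controllable with respect to $L(G_{k_j})$ and $A_{k_j,u}$, and $M_i$ with respect to $L(G_i)\parallel L_j$ and $A_{i+k_j,u}$; moreover $K=\parallel_{j}\parallel_{i\in I_j}M_i\subseteq L(G)\parallel(\parallel_{j}L_j)$, whence $P_{k_j}(K)\subseteq L_j$ and $P_i(K)\subseteq L(G_i)$. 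It remains to convert these facts into controllability of $P_{k_j}(K)$ and of $P_{i+k_j}(K)$. I would do so along the lines of the proof of Lemma~\ref{cc2cc}: since the alphabets $A_{I_l}\cup A_k$ of distinct groups share only events of $A_k\subseteq A_{k_j}$, Lemma~\ref{lemma:Wonham} decomposes $P_{k_j}(K)=P_{k_j}(\parallel_{i\in I_j}M_i)\parallel\parallel_{l\neq j}P_k(\parallel_{i\in I_l}M_i)$, and analogously for $P_{i+k_j}(K)$; then Lemma~\ref{LeiFeng}, applied with the observer and OCC hypotheses on the projections $P_{k_j}$, makes each factor controllable with respect to the corresponding plant factor; finally Lemma~\ref{feng}, Lemma~\ref{obvious} and transitivity of controllability reassemble these into items~\ref{cc1} and~\ref{cc2} of Definition~\ref{def:2-conditionalcontrollability}, i.e.\ $K$ is two-level conditionally controllable.

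The hard part is precisely this last transfer in the necessity direction. The given supervisors need not realize $P_{k_j}(K)$ and $P_{i+k_j}(K)$ exactly --- they may be strictly more permissive --- so controllability of $P_{k_j}(K)$ with respect to $L(G_{k_j})$ does \emph{not} follow merely from controllability of $L_j$; it is the observer and OCC assumptions on the projections $P_{k_j}$ (for which the alphabets $A_{k_j}$ were enlarged in the coordinator construction) that license Lemma~\ref{LeiFeng} here. The second delicate point is purely combinatorial: one must keep careful track of which shared events land in which coordinator alphabet so that the decompositions above are legitimate, i.e.\ so that Lemma~\ref{lemma:Wonham} genuinely applies; this bookkeeping is where an error would most easily slip in.
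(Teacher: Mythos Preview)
The paper does not actually prove this theorem: it is stated with a closed box and explicitly ``recalled from~\cite{KMvS13}'', so there is no in-paper argument to compare against. What follows is an assessment of your attempt on its own merits.

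Your sufficiency direction is correct and is exactly the intended construction: controllability items~\ref{cc1} and~\ref{cc2} furnish supervisors $S_{k_j}$ and $S_i$ realizing $P_{k_j}(K)$ and $P_{i+k_j}(K)$, and two-level conditional decomposability (in the simplified form of Remark~\ref{simple}) reassembles these into $K$.

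Your necessity direction, however, has a real gap. You invoke Lemma~\ref{LeiFeng}, and hence observer and OCC properties of the projections $P_{k_j}$, to push controllability down to the factors. These hypotheses are \emph{not} part of Theorem~\ref{th:2-controlsynthesissafety}; the paper only says one ``might need'' the observer property to keep the projected coordinator small, and OCC is not part of the coordinator construction at all. So your resolution of what you rightly call ``the hard part'' amounts to adding assumptions to the theorem. Moreover, for item~\ref{cc2} of Definition~\ref{def:2-conditionalcontrollability} the detour through Lemma~\ref{LeiFeng} is unnecessary: with $M_i=L(S_i/[G_i\|L_j])$ one checks directly, using Lemma~\ref{lemma:Wonham} and Lemma~\ref{obvious}, that $P_{i+k_j}(K)=M_i\parallel P_{k_j}(K)$; since $M_i$ is controllable with respect to $L(G_i)\|L_j$, $P_{k_j}(K)$ is trivially controllable with respect to itself, and $P_{k_j}(K)\subseteq L_j$, Lemma~\ref{feng} gives controllability of $P_{i+k_j}(K)$ with respect to $L(G_i)\|L_j\|P_{k_j}(K)=L(G_i)\|P_{k_j}(K)$ without any observer or OCC assumption. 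The genuinely delicate point is item~\ref{cc1}, i.e.\ controllability of $P_{k_j}(K)$ with respect to $L(G_{k_j})$: from $P_{k_j}(K)\subseteq L_j$ and controllability of $L_j$ you only get $P_{k_j}(K)A_{k_j,u}\cap L(G_{k_j})\subseteq L_j$, not $\subseteq P_{k_j}(K)$; your sketch does not close this gap without the extra hypotheses, and you should either supply a direct argument that stays within the stated assumptions or make explicit which additional structural assumption on the coordinator (beyond what Problem~\ref{problem:m-controlsynthesis} requires) you are using.
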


  In the last section we have recalled two-level coordination control framework with the main existential result. A natural question is what to do if the specification fails to satisfy the necessary and sufficient conditions for being achievable. We recall from~\cite{KMvS14} that in the case specification $K$ fails to be conditionally controllable, the supremal conditionally controllable sublanguage always exists and can be computed in a distributive way. First, we show that two-level conditional controllability is closed under language unions as well.

  \begin{theorem}\label{existence}
    Two-level conditional controllability is closed under language unions, that is, supremal two-level conditional controllable languages always exist.
  \end{theorem}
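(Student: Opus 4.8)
The plan is to establish closure under binary unions; since every condition in play has the shape ``$X\cap Y\subseteq Z$'' with the left side distributing over unions, the argument will extend verbatim to arbitrary unions, and combined with the trivial fact that $\emptyset$ is two-level conditionally controllable this yields the supremal element as the union of the (nonempty) family of all two-level conditionally controllable sublanguages of a given specification. So I would take $K_1,K_2\subseteq L(\|_{i=1}^n G_i)$ both two-level conditionally controllable with respect to the same generators, alphabets, coordinators and $A_u$, set $K=K_1\cup K_2$, and verify the two items of Definition~\ref{def:2-conditionalcontrollability} for $K$. The standing facts used throughout are that projections and inverse projections distribute over unions --- so $P_{k_j}(K)=P_{k_j}(K_1)\cup P_{k_j}(K_2)$ and $P_{i+k_j}(K)=P_{i+k_j}(K_1)\cup P_{i+k_j}(K_2)$ --- and that, pulling the factor $L(G_i)$ out of the union, $L(G_i)\parallel P_{k_j}(K)=\bigl(L(G_i)\parallel P_{k_j}(K_1)\bigr)\cup\bigl(L(G_i)\parallel P_{k_j}(K_2)\bigr)$.

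Item~\ref{cc1} is immediate: its plant $L(G_{k_j})$ and uncontrollable alphabet $A_{k_j,u}$ are specification-independent, so it is the textbook observation that controllability with respect to a fixed plant is preserved under unions. For item~\ref{cc2}, fix $j$ and $i\in I_j$, take $s\in P_{i+k_j}(K)$ and $a\in A_{i+k_j,u}$ with $sa\in L(G_i)\parallel P_{k_j}(K)$, and assume without loss of generality $s\in P_{i+k_j}(K_1)$. The step I would carry out is to show $sa\in L(G_i)\parallel P_{k_j}(K_1)$, after which item~\ref{cc2} for $K_1$ gives $sa\in P_{i+k_j}(K_1)\subseteq P_{i+k_j}(K)$ and we are done. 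For this, note $P_i(sa)\in L(G_i)$ (true of every word of the union plant) and $P_{k_j}(sa)\in P_{k_j}(K)\subseteq L(G_{k_j})$, using the standard inclusion $P_{k_j}(L(\|_i G_i))\subseteq\|_i P_{k_j}(L(G_i))=L(G_{k_j})$; and $P_{k_j}(s)\in P_{k_j}(K_1)$ since $P_{k_j}=P_{k_j}\circ P_{i+k_j}$. If $a\notin A_{k_j}$ then $P_{k_j}(sa)=P_{k_j}(s)\in P_{k_j}(K_1)$; if $a\in A_{k_j}$ then $a\in A_{k_j,u}$ and $P_{k_j}(sa)=P_{k_j}(s)a$ lies in $P_{k_j}(K_1)A_{k_j,u}\cap L(G_{k_j})$, which is contained in $P_{k_j}(K_1)$ by item~\ref{cc1} for $K_1$. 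Either way $P_{k_j}(sa)\in P_{k_j}(K_1)$, so indeed $sa\in L(G_i)\parallel P_{k_j}(K_1)$.

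The only genuinely nontrivial point --- and the one I would treat most carefully --- is exactly this last step: since the item~\ref{cc2} plant $L(G_i)\parallel P_{k_j}(K)$ depends on the specification, an uncontrollable extension of a word inherited from $K_1$ might a priori fall into the $K_2$-component of the union plant only; the argument above shows this cannot occur, because the coordinator projection $P_{k_j}(K_1)$ is itself controllable with respect to the fixed coordinator language $L(G_{k_j})$, so the $A_{k_j}$-projection of the word can never escape $P_{k_j}(K_1)$. All remaining manipulations are routine distributivity of projections over unions and the standard union-closure of controllability under a fixed plant.
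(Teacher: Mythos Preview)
Your proof is correct and follows essentially the same route as the paper's: both dispatch item~\ref{cc1} by the standard union-closure of controllability under a fixed plant, and for item~\ref{cc2} both handle the specification-dependent plant by the same case split on whether the uncontrollable event lies in $A_{k_j}$, invoking item~\ref{cc1} for the individual summand to show the coordinator projection cannot escape it. The only cosmetic differences are that the paper works directly with an arbitrary index set rather than reducing to the binary case, and phrases the case split as $u\in A_{k_j,u}$ versus $u\in A_i\setminus A_{k_j,u}$ (equivalent to yours since $a\in A_u$ is given).
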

  \begin{proof}
    We have to show that an arbitrary union of two-level conditionally-controllable sublanguages of a language is two-level conditionally controllable. Let $I$ be an index set, and let $K_i$, $i\in I$, be two-level conditionally controllable sublanguages of a language $K\subseteq A^*$. We will show that $\cup_{i\in I} K_i$ is also two-level conditionally controllable.

    According to~\ref{cc1} of Definition~\ref{def:2-conditionalcontrollability}, we have to show that $P_{k_j}(\cup_{i\in I} K_i)$ is controllable with respect to $L(G_{k_j})$ and $A_{k_j,u}$. This is straightforward. Indeed, for all $j=1,\dots,m$, $P_{k_j}(\cup_{i\in I} K_i)A_{k_j,u} \cap L(G_{k_j}) = \cup_{i\in I} (P_{k_j}(K_i)A_{k_j,u} \cap L(G_{k_j}))\subseteq \cup_{i\in I} P_{k_j}(K_i)$, because all $K_i$, $i\in I_j$, are two-level conditionally controllable and hence satisfy item~\ref{cc1}. Concerning item \ref{cc2}, let us show that for $j=1,\dots,m$ and $i\in I_j$, $P_{i+k_j}(\cup_{i\in I} K_i)$ is controllable with respect to $L(G_i) \parallel P_{k_j}(\cup_{i\in I} K_i)$ and $A_{i+k_j,u}$. Note that $P_{i+k_j}(\cup_{i\in I} K_i) A_{i+k_j,u}  \cap L(G_i) \parallel P_{k_j} (\cup_{j\in I} K_j) = \cup_{i\in I} \cup_{j\in I} [P_{i+k_j}(K_i)A_{i+k_j,u}  \cap L(G_i) \parallel P_{k_j}( K_j)]$. Let us take $s\in P_{i+k_j}(K_i)$ and $u\in A_{i+k_j,u}$ such that $su \in L(G_i) \parallel P_{k_j}( K_j)$. We will show that $su \in L(G_i) \parallel P_{k_j}( K_i)$ as well. First of all, $s\in P_{i+k_j}(K_i)$ implies that $P^{i+k_j}_{k_j}(s)\in  P^{i+k_j}_{k_j}P_{i+k_j}(K_i)=P_{k_j}( K_i)$. Note that $P^{i+k_j}_{k_j}(su)\in P^{i+k_j}_{k_j}(L(G_i) \parallel L(G_{k_j})) \subseteq  L(G_{k_j})$. Now we distinguish two cases (i) $u\in A_{k_j,u}$ and (ii) $u\in A_i\setminus A_{k_j,u}$.

    In the first case, we get $P^{i+k_j}_{k_j}(su)=P^{i+k_j}_{k_j}(s)u \in P_{k_j}( K_i)$, from controllability of $P_{k_j}( K_i)$ with respect to $L(G_{k_j})$ and $ A_{k_j,u}$ (item~\ref{cc1}). In the second case, we immediately get $P^{i+k_j}_{k_j}(su)=P^{i+k_j}_{k_j}(s)\in P_{k_j}( K_i)$. From $su \in L(G_i) \parallel P_{k_j}( K_j)$ we have that $su\in (P^{i+k_j}_{i})^{-1}(L(G_i))$. Thus, $su \in (P^{i+k_j}_{k_j})^{-1}(P_{k_j}( K_i))\cap  (P^{i+k_j}_{i})^{-1}(L(G_i))= L(G_i) \parallel P_{k_j}( K_i)$. Finally, by applying item~\ref{cc2} of two-level conditional controllability of $K_i$, we obtain that $su\in P_{k_j}(K_i)\subseteq P_{k_j}(\cup_{i\in I} K_i)$. Hence, item~\ref{cc2} of two-level conditional controllability holds for $\cup_{i\in I} K_i$.
  \end{proof}

  From Theorem~\ref{existence}, the supremal two-level conditional controllable sublanguage of a specification $K$ with respect to $A_{k_1},\dots A_{k_m}$, and $A_{u}$, denoted by $\suptwoCC(K, L, A_{i+k_j})$, always exists. Below we propose a procedure to compute the supremal two-level conditional controllable sublanguage $\suptwoCC(K, L, A_{i+k_j})$.

  Similarly as in the centralized coordination we introduce the following notation. For all $j=1,\dots,m$ and $i\in I_j$,
  \begin{equation}\label{sup2cc}
    \begin{aligned}
      \supC_{k_j}   & = \supC(P_{k_j}(K),L(G_{k_j}), A_{k_j,u})\\
      \supC_{i+k_j} & = \supC(P_{i+k_j}(K), L(G_i) \| \supC_{k_j}, A_{i+k_j,u})
    \end{aligned}
  \end{equation}
  where $\supC(K,L,A_u)$ denotes the supremal controllable sublanguage of $K$ with respect to $L$ and $A_u$, see~\cite{CL08}.

  Similarly as in the centralized coordination, the following inclusion always holds true.

  \begin{lem} \label{inclusion}
    For all $j=1,\dots ,m$ and for all $i\in I_j$, we have that 
    $P^{i+k_j}_{k_j}(\supC_{i+k_j})\subseteq \supC_{k_j}$. 
  \end{lem}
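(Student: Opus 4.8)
The plan is to derive the inclusion from nothing more than the fact that a supremal controllable sublanguage is, in particular, a sublanguage of the plant language it is computed against, combined with elementary identities for projections and their inverse images. Concretely, I would first recall that, by the definition of $\supC$ used in~(\ref{sup2cc}), the language $\supC_{i+k_j}=\supC(P_{i+k_j}(K),\,L(G_i)\parallel\supC_{k_j},\,A_{i+k_j,u})$ is contained in the language it is restricted to, that is,
\[
  \supC_{i+k_j}\ \subseteq\ L(G_i)\parallel\supC_{k_j}
\]
(more precisely it lies in $P_{i+k_j}(K)\cap (L(G_i)\parallel\supC_{k_j})$, but only the second factor is needed here).

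Next I would unfold the synchronous product over the alphabet $A_i\cup A_{k_j}$ using the definition of $\parallel$: since $\supC_{k_j}\subseteq A_{k_j}^*$,
\[
  L(G_i)\parallel\supC_{k_j}=(P^{i+k_j}_i)^{-1}(L(G_i))\cap (P^{i+k_j}_{k_j})^{-1}(\supC_{k_j})\ \subseteq\ (P^{i+k_j}_{k_j})^{-1}(\supC_{k_j})\,,
\]
so that $\supC_{i+k_j}\subseteq (P^{i+k_j}_{k_j})^{-1}(\supC_{k_j})$. Applying the projection $P^{i+k_j}_{k_j}$ to this inclusion and using monotonicity of projections together with the identity $P(P^{-1}(M))=M$ for any language $M$ over the codomain alphabet (here $M=\supC_{k_j}\subseteq A_{k_j}^*$ and $P=P^{i+k_j}_{k_j}$ is surjective onto $A_{k_j}^*$), I get
\[
  P^{i+k_j}_{k_j}(\supC_{i+k_j})\ \subseteq\ P^{i+k_j}_{k_j}\bigl((P^{i+k_j}_{k_j})^{-1}(\supC_{k_j})\bigr)=\supC_{k_j}\,,
\]
which is exactly the asserted inclusion.

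There is essentially no hard step; the only point requiring care is the convention on $\supC$ invoked in the first paragraph. One must use the variant that returns a sublanguage of the imposed plant language (the one implicit in~(\ref{sup2cc})); with the alternative convention under which $\supC(E,M,\cdot)$ is only required to be a sublanguage of $E$, the inclusion $\supC_{i+k_j}\subseteq (P^{i+k_j}_{k_j})^{-1}(\supC_{k_j})$ would instead demand $P_{k_j}(K)\subseteq\supC_{k_j}$, which is false in general since $\supC_{k_j}$ is merely a sublanguage of $P_{k_j}(K)$. Everything else is routine bookkeeping with (inverse) projections, in the same spirit as Lemma~\ref{obvious}.
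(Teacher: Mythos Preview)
Your argument is correct and is exactly the paper's own one-line proof spelled out in full detail: the paper simply observes that $\supC_{k_j}$ is the $A_{k_j}$-component of the plant $L(G_i)\parallel\supC_{k_j}$ against which $\supC_{i+k_j}$ is computed, and hence the projection of $\supC_{i+k_j}$ to $A_{k_j}$ must lie inside it. Your careful remark about the convention that $\supC(E,M,\cdot)\subseteq M$ is a worthwhile addition, since the inclusion would indeed fail under the alternative convention.
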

  \begin{proof}
    It follows from the definition of $\supC_{i+k_j}$ that $P^{i+k_j}_{k_j} (\supC_{i+k_j}) \subseteq \supC_{k_j}$, because $\supC_{k_j}$ is part of the plant language (over the alphabet $A_{k_j}$) of $ \supC_{i+k_j}$.
  \end{proof}
  
  Transitivity of controllability is needed later.
  \begin{lem}[\cite{KMvS14}]\label{lem_transC}
    Let $K\subseteq L \subseteq M$ be languages over $A$ such that $K$ is controllable with respect to $L$ and $A_u$, and $L$ is controllable with respect to $M$ and $A_u$. Then $K$ is controllable with respect to $M$ and $A_u$. 
    \hfill\QED
  \end{lem}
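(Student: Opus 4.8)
The plan is to unfold the definition of controllability and chain the two hypotheses together. Recall that $K$ controllable with respect to $L$ and $A_u$ means $KA_u\cap L\subseteq K$, and similarly $LA_u\cap M\subseteq L$. The goal is $KA_u\cap M\subseteq K$.

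First I would take an arbitrary word $w\in KA_u\cap M$ and write it as $w=sa$ with $s\in K$ and $a\in A_u$; note $w\in M$ by assumption. Since $K\subseteq L$, we have $s\in L$, hence $sa\in LA_u$. Combined with $sa=w\in M$, this gives $sa\in LA_u\cap M$, so by controllability of $L$ with respect to $M$ and $A_u$ we conclude $sa\in L$. This intermediate step — establishing $sa\in L$ before one is entitled to invoke the first controllability assumption — is the only point that requires any care; everything else is bookkeeping.

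Then $sa\in KA_u\cap L$, and controllability of $K$ with respect to $L$ and $A_u$ yields $sa\in K$, i.e. $w\in K$. Since $w$ was arbitrary, $KA_u\cap M\subseteq K$, which is exactly controllability of $K$ with respect to $M$ and $A_u$. No use of prefix-closedness is needed, and there is no genuine obstacle here: the statement is a direct transitivity argument, which is why it is quoted from~\cite{KMvS14} rather than reproved in detail.
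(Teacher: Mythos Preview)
Your proof is correct: the two-step chaining through $L$ is exactly the standard argument for transitivity of controllability. The paper itself does not prove this lemma at all but merely cites it from~\cite{KMvS14} (hence the bare \textsc{qed} mark), so there is no approach to compare against; your write-up is the natural direct proof.
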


  \begin{theorem}\label{construction}
    Consider Problem~\ref{problem:m-controlsynthesis} and languages defined in~(\ref{sup2cc}). If $\cap_{i\in I_j} P^{i+k_j}_{k_j} (\supC_{i+k_j})$ is controllable with respect to $L(G_{k_j})$ and $A_{k_j,u}$, and if for all $j=1,\dots ,m$:
$P_k^{k_j}$ is an $L_{k_j}$-observer and OCC for $L_{k_j}$,  then $\suptwoCC(K, L, A_{i+k_j}) = \|_{j=1}^m \|_{i\in I_j} \supC_{i+k_j}$.
  \end{theorem}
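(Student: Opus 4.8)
The plan is to prove the two inclusions $M\subseteq N$ and $N\subseteq M$, where $M:=\parallel_{j=1}^m\parallel_{i\in I_j}\supC_{i+k_j}$ and $N:=\suptwoCC(K,L,A_{i+k_j})$. For $M\subseteq N$ it is enough to check that $M$ is a two-level conditionally controllable sublanguage of $K$. The inclusion $M\subseteq K$ follows from $\supC_{i+k_j}\subseteq P_{i+k_j}(K)$, monotonicity of the synchronous product, and the two-level conditional decomposability $K=\parallel_{j=1}^m\parallel_{i\in I_j}P_{i+k_j}(K)$ (recall $A_k\subseteq A_{k_j}$ by Remark~\ref{simple}, so $P_{j+k_j+k}=P_{j+k_j}$). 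For the two-level conditional controllability of $M$ I would apply Lemma~\ref{cc2cc} to the group languages $M_l:=\parallel_{i\in I_l}\supC_{i+k_l}$, $l=1,\dots,m$; its observer and OCC hypotheses are exactly those assumed in the theorem, and each $M_l$, being a synchronous product of languages over the alphabets $A_i\cup A_{k_l}$, is conditionally decomposable with respect to $(A_i)_{i\in I_l}$ and $A_{k_l}$ automatically (as in the first paragraph of the proof of Lemma~\ref{cc2cc}). Hence the only substantial point is that each $M_l$ is conditionally controllable with respect to $G_i$, $i\in I_l$, and $G_{k_l}$.

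Verifying this is the core of the argument, and item~\ref{cc2} is where I expect the main obstacle. For item~\ref{cc1}, Lemma~\ref{lemma:Wonham} gives $P_{k_l}(M_l)=\cap_{i\in I_l}P^{i+k_l}_{k_l}(\supC_{i+k_l})$, which is controllable with respect to $L(G_{k_l})$ and $A_{k_l,u}$ precisely by the standing hypothesis of the theorem. For item~\ref{cc2}, I would write $P_{i+k_l}(M_l)=\supC_{i+k_l}\parallel R_i$ with $R_i:=\cap_{i'\in I_l\setminus\{i\}}P^{i'+k_l}_{k_l}(\supC_{i'+k_l})$ a language over $A_{k_l}$ (by Lemma~\ref{lemma:Wonham}, using $A_i\cap A_{i'}\subseteq A_{k_l}$); since $\supC_{i+k_l}$ is controllable with respect to $L(G_i)\parallel\supC_{k_l}$ and $A_{i+k_l,u}$ by definition, Lemma~\ref{feng} gives controllability of $P_{i+k_l}(M_l)$ with respect to the \emph{larger} plant $L(G_i)\parallel\supC_{k_l}\parallel R_i$. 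But item~\ref{cc2} demands the \emph{smaller} plant $L(G_i)\parallel P_{k_l}(M_l)$, and Lemma~\ref{inclusion} only yields $P_{k_l}(M_l)\subseteq\supC_{k_l}\cap R_i$, not equality. Since controllability is inherited by a smaller plant as soon as the controlled language lies inside it, I would check $P_{i+k_l}(M_l)\subseteq L(G_i)\parallel P_{k_l}(M_l)$ (using $P^{i+k_l}_{k_l}(P_{i+k_l}(M_l))=P_{k_l}(M_l)$ and $P_i(M_l)\subseteq L(G_i)$) and conclude controllability of $P_{i+k_l}(M_l)$ with respect to $L(G_i)\parallel P_{k_l}(M_l)$ and $A_{i+k_l,u}$. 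With both items established, Lemma~\ref{cc2cc} gives that $M=\parallel_{l=1}^m M_l$ is two-level conditionally controllable, so $M\subseteq N$.

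For the reverse inclusion $N\subseteq M$, note that $N\subseteq\parallel_{j=1}^m\parallel_{i\in I_j}P_{i+k_j}(N)$ holds trivially, so by monotonicity it suffices to show $P_{i+k_j}(N)\subseteq\supC_{i+k_j}$ for all $j$ and $i\in I_j$. By item~\ref{cc1} for $N$, $P_{k_j}(N)$ is controllable with respect to $L(G_{k_j})$ and $A_{k_j,u}$; combined with $P_{k_j}(N)\subseteq P_{k_j}(K)$ this gives $P_{k_j}(N)\subseteq\supC_{k_j}$, and in particular controllability of $P_{k_j}(N)$ with respect to $\supC_{k_j}$ and $A_{k_j,u}$. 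Then Lemma~\ref{feng} makes $L(G_i)\parallel P_{k_j}(N)$ controllable with respect to $L(G_i)\parallel\supC_{k_j}$ and $A_{i+k_j,u}$; combining this with item~\ref{cc2} for $N$ through transitivity of controllability (Lemma~\ref{lem_transC}, using $P_{i+k_j}(N)\subseteq L(G_i)\parallel P_{k_j}(N)$ as in the previous paragraph) shows $P_{i+k_j}(N)$ is controllable with respect to $L(G_i)\parallel\supC_{k_j}$ and $A_{i+k_j,u}$. Since $P_{i+k_j}(N)\subseteq P_{i+k_j}(K)$, it follows that $P_{i+k_j}(N)\subseteq\supC_{i+k_j}$, hence $N\subseteq M$. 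Apart from the plant-containment point in the second paragraph, the remaining steps are routine projection bookkeeping built on Lemmas~\ref{obvious}, \ref{feng}, and~\ref{lemma:Wonham}.
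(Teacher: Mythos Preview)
Your proposal is correct and follows essentially the same route as the paper: the inclusion $N\subseteq M$ via $P_{i+k_j}(N)\subseteq\supC_{i+k_j}$ using transitivity of controllability, and $M\subseteq N$ via Lemma~\ref{cc2cc} after verifying conditional controllability of each $M_l$. The only cosmetic difference is in item~\ref{cc2}: the paper observes directly that $P_{i+k_l}(M_l)=\supC_{i+k_l}\parallel P_{k_l}(M_l)$ (your $R_i$ absorbs the extra factor $P^{i+k_l}_{k_l}(\supC_{i+k_l})$ by Lemma~\ref{obvious}), so Lemma~\ref{feng} already yields controllability with respect to $L(G_i)\parallel\supC_{k_l}\parallel P_{k_l}(M_l)=L(G_i)\parallel P_{k_l}(M_l)$ (the last equality by Lemma~\ref{inclusion}), bypassing your ``smaller plant'' reduction.
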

  \begin{proof}
    For simplicity, we denote $\suptwoCC(K, L, A_{i+k_j})=\suptwoCC$, $M_j=\|_{i\in I_j}\supC_{i+k_j}$, and $M =\|_{j=1}^m M_j$. In order to show the inclusion $M \subseteq \suptwoCC$, it suffices to prove that for $j=1,\dots ,m$ and for $i\in I_j$, $P_{i+k_j}(\suptwoCC) \subseteq \supC_{i+k_j}$. Recall that $P_{i+k_j}(\suptwoCC)$ is controllable with respect to $L(G_i)\| P_{k_j}(\suptwoCC)$ and $A_{i+k_j,u}$, and $L(G_i)\| P_{k_j}(\suptwoCC)$ is controllable with respect to $L(G_i) \| \supC_{k_j}$ and $A_{i+k_j,u}$, because $P_{k_j}(\suptwoCC)$ being controllable with respect to $L(G_{k_j})$ is also controllable with respect to $\supC_{k_j}\subseteq L(G_{k_j})$. By the transitivity of controllability (Lemma~\ref{lem_transC}), $P_{i+k_j}(\suptwoCC)$ is controllable with respect to $L(G_i) \| \supC_{k_j}$ and $A_{i+k_j,u}$, which implies that $P_{i+k_j}(\suptwoCC) \subseteq \supC_{i+k_j}$.

    To prove the opposite inclusion, we show that $M$ is two-level conditionally controllable with respect to $G_i$, $i\in I_l$, and $G_{k_l}$, $l=1,\dots, m$. It follows from Lemma~\ref{cc2cc} that with our assumptions it is sufficient to show that for all $j=1,\dots ,m$, the languages $M_j$ are conditionally controllable with respect to $G_i$, $i\in I_j$, and $G_{k_j}$. For item 1, $P_{k_j}(M_j) = P_{k_j}(\|_{i\in I_j} \supC_{i+k_j})= \cap_{i\in I_j} P^{i+k_j}_{k_j} (\supC_{i+k_j})$ (by Lemma~\ref{lemma:Wonham} and the fact that $A_{k_j}$ contains all shared events of subsystems of the group $I_{j}$) is by assumption controllable with respect to $L(G_{k_j})$ and $A_{k_j,u}$.

    For item 2, we observe that for all $i\in I_j$, we get (the distributivity holds due to Lemma~\ref{lemma:Wonham}) $P_{i+k_j}(M_j) = P_{i+k_j}(\|_{i'\in I_j} \supC_{i'+k_j}) = \|_{i'\in I_j} P^{i+k_j}_{k_j} (\supC_{i'+k_j}) = \supC_{i+k_j} \parallel \|_{i'\in I_j}^{i\neq i'} P_{k_j} (\supC_{i'+k_j})$. Observe that for all $i\in I_j$, $P_{i+k_j}(M_j) = \supC_{i+k_j} \| P_{k_j}(M_j)$. This is because $\supC_{i+k_j} \| P_{k_j} (\|_{i'\in I_j} \supC_{i'+k_j}) = \supC_{i+k_j} \| \|_{i'\in I_j} P_{k_j} (\|_{i'\in I_j}\supC_{i'+k_j})= \supC_{i+k_j} \parallel \|_{i'\in I_j}^{i\neq i'} P_{k_j} (\supC_{i'+k_j})$. Therefore, $P_{i+k_j}(M_j) = \supC_{i+k_j} \| P_{k_j}(M_j) $ is controllable with respect to $[L(G_i) \| \supC_{k_j}] \parallel P_{k_j}(M) = L(G_i) \parallel P_{k_j}(M)$ using the fact that $P^{i+k_j}_{k_j} (\supC_{i+k_j}) \subseteq \supC_{k_j}$, for any $j=1,\dots ,m$ and $i\in I_j$, cf. Lemma~\ref{inclusion}. 
  \end{proof}

  Note that controllability of $\cap_{i\in I_j} P^{i+k_j}_{k_j} (\supC_{i+k_j})$ with respect to $L(G_{k_j})$ and $A_{k_j,u}$ for all $j=1,\dots ,m$, is not a suitable condition for verification. Clearly, for our prefix-closed languages, controllability of $P^{i+k_j}_{k_j} (\supC_{i+k_j})$ with respect to $L(G_{k_j})$ and $A_{k_j,u}$, $j=1,\dots ,m$ and $i\in I_j$, implies it. Moreover, two stronger checkable conditions are provided below. 
  
  It is easy to see that the equality in Lemma~\ref{inclusion} implies the sufficient condition of Theorem~\ref{construction}. Indeed, if for all $j=1,\dots ,m$ and for all $i\in I_j$, we have $P^{i+k_j}_{k_j} (\supC_{i+k_j}) \subseteq \supC_{k_j}$, then in particular $P^{i+k_j}_{k_j} (\supC_{i+k_j})$ is controllable with respect to $L(G_{k_j})$ and $A_{k_j,u}$. Hence, for all $j=1,\dots ,m$, $\cap_{i\in I_j} P^{i+k_j}_{k_j} (\supC_{i+k_j})$ is controllable with respect to $L(G_{k_j})$ and $A_{k_j,u}$, which proves the following result.
  
  \begin{corollary}\label{construction1}
    Consider the setting of Problem~\ref{problem:m-controlsynthesis} and the languages defined in~(\ref{sup2cc}). If for all $j=1,\dots ,m$:
$P_k^{k_j}$ is an $L_{k_j}$-observer and OCC for $L_{k_j}$, and for all $i\in I_j$, $P^{i+k_j}_{k_j}(\supC_{i+k_j}) = \supC_{k_j}$, then $\suptwoCC(K, L, A_{i+k_j}) = \|_{j=1}^m \parallel_{i\in I_j}\supC_{i+k_j}$. 
    \hfill\QED
  \end{corollary}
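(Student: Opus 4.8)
The plan is to obtain Corollary~\ref{construction1} as an immediate specialization of Theorem~\ref{construction}: it suffices to check that the equality hypothesis $P^{i+k_j}_{k_j}(\supC_{i+k_j}) = \supC_{k_j}$ implies the controllability hypothesis of that theorem, while the observer and OCC hypothesis is already shared verbatim by both statements and needs no further work.

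First I would recall that, by its very definition in~(\ref{sup2cc}), $\supC_{k_j} = \supC(P_{k_j}(K), L(G_{k_j}), A_{k_j,u})$ is controllable with respect to $L(G_{k_j})$ and $A_{k_j,u}$. Assuming $P^{i+k_j}_{k_j}(\supC_{i+k_j}) = \supC_{k_j}$ for every $i\in I_j$, each language occurring in the intersection $\cap_{i\in I_j} P^{i+k_j}_{k_j}(\supC_{i+k_j})$ equals $\supC_{k_j}$, so the intersection itself is $\supC_{k_j}$ and is therefore controllable with respect to $L(G_{k_j})$ and $A_{k_j,u}$. This is exactly the remaining hypothesis required by Theorem~\ref{construction}, so invoking that theorem yields $\suptwoCC(K,L,A_{i+k_j}) = \|_{j=1}^m \|_{i\in I_j}\supC_{i+k_j}$, which is the desired conclusion.

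I do not anticipate a substantial obstacle here, since the corollary is a straightforward specialization of the theorem. The one point worth flagging is the gap between the inclusion $P^{i+k_j}_{k_j}(\supC_{i+k_j}) \subseteq \supC_{k_j}$ of Lemma~\ref{inclusion}, which always holds, and the equality hypothesized in the corollary: the inclusion by itself does not confer controllability on the (possibly smaller) left-hand side, because a sublanguage of a controllable language need not be controllable. It is precisely the assumed equality that makes each $P^{i+k_j}_{k_j}(\supC_{i+k_j})$ controllable with respect to $L(G_{k_j})$ and $A_{k_j,u}$, and hence makes the intersection controllable as well, so this is the step on which the argument genuinely rests.
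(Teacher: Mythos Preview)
Your proposal is correct and follows essentially the same route as the paper: both arguments observe that the equality $P^{i+k_j}_{k_j}(\supC_{i+k_j}) = \supC_{k_j}$ makes each term of the intersection equal to $\supC_{k_j}$, which is controllable with respect to $L(G_{k_j})$ and $A_{k_j,u}$ by its definition in~(\ref{sup2cc}), so the controllability hypothesis of Theorem~\ref{construction} is met and the conclusion follows. Your remark that the inclusion of Lemma~\ref{inclusion} alone would not suffice is well taken; the paper's informal justification is slightly loose on this point, so your version is in fact cleaner.
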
 
  
  There is yet another sufficient condition
  that guarantees the controllability
  requirement in Theorem~\ref{construction}.
  Namely, local control consistency and observer properties
  (that are checkable by well-known methods).

  \begin{lem} \label{observerlcc}
    Let  for all $j=1, \dots ,m$ and for all $i\in I_j$, $P^{i+k_j}_{k_j}$ be an $(P^{i+k_j}_i)^{-1}L(G_i)$-observer and OCC for $(P^{i+k_j}_i)^{-1}L(G_i)$. Then for all $j=1, \dots ,m$, $\cap_{i\in I_j} P^{i+k_j}_{k_j} (\supC_{i+k_j})$ is controllable with respect to $L(G_{k_j})$ and $A_{k_j,u}$.
  \end{lem}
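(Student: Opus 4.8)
The plan is to fix a group $I_j$, reduce the claim to a single index $i\in I_j$, and then obtain controllability of the projected language from Lemma~\ref{LeiFeng}, after first replacing the plant that governs $\supC_{i+k_j}$ by a slightly larger one to which the hypothesised observer and OCC properties apply. For the reduction I would use that, for prefix-closed languages, a finite intersection of languages each controllable with respect to the \emph{same} plant $L(G_{k_j})$ and $A_{k_j,u}$ is again controllable with respect to $L(G_{k_j})$ and $A_{k_j,u}$, since $(\bigcap_{i\in I_j} X_i)A_{k_j,u}\cap L(G_{k_j})\subseteq\bigcap_{i\in I_j}(X_i A_{k_j,u}\cap L(G_{k_j}))\subseteq\bigcap_{i\in I_j} X_i$. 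As each $P^{i+k_j}_{k_j}(\supC_{i+k_j})$ is prefix-closed, it then suffices to show that for every $j$ and every $i\in I_j$ the language $P^{i+k_j}_{k_j}(\supC_{i+k_j})$ is controllable with respect to $L(G_{k_j})$ and $A_{k_j,u}$.

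Next I would enlarge the plant of $\supC_{i+k_j}$. By definition $\supC_{i+k_j}$ is controllable with respect to $L(G_i)\|\supC_{k_j}$ and $A_{i+k_j,u}$, while $\supC_{k_j}$ is controllable with respect to $L(G_{k_j})$ and $A_{k_j,u}$. Applying Lemma~\ref{feng} to the factors $L(G_i)$ (controllable with respect to itself) and $\supC_{k_j}$ shows that $L(G_i)\|\supC_{k_j}$ is controllable with respect to $L(G_i)\|L(G_{k_j})$ and $A_{i,u}\cup A_{k_j,u}=A_{i+k_j,u}$. Since $\supC_{i+k_j}\subseteq L(G_i)\|\supC_{k_j}\subseteq L(G_i)\|L(G_{k_j})$, transitivity of controllability (Lemma~\ref{lem_transC}) yields that $\supC_{i+k_j}$ is controllable with respect to $L(G_i)\|L(G_{k_j})$ and $A_{i+k_j,u}$.

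Then I would transfer the observer/OCC hypotheses onto this enlarged plant. Writing $L=(P^{i+k_j}_i)^{-1}L(G_i)$, one has $L(G_i)\|L(G_{k_j})=L\cap(P^{i+k_j}_{k_j})^{-1}(L(G_{k_j}))$, so $L(G_i)\|L(G_{k_j})$ is obtained from $L$ by intersecting with the inverse image under $P^{i+k_j}_{k_j}$ of a language over its image alphabet $A_{k_j}$. Two elementary facts are needed: (i) if $P$ is an $L$-observer and $H$ is a language over the image alphabet of $P$, then $P$ is also an $(L\cap P^{-1}(H))$-observer, because the word witnessing the observer property for $L$ can be chosen so that its image is the prescribed target word, which lies in $H$, hence that word lies in $P^{-1}(H)$ as well; and (ii) OCC is inherited by sublanguages, since its defining implication only becomes easier to satisfy as the language shrinks. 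Since by hypothesis $P^{i+k_j}_{k_j}$ is an $L$-observer and OCC for $L$, facts (i) and (ii) make it an $(L(G_i)\|L(G_{k_j}))$-observer and OCC for $L(G_i)\|L(G_{k_j})$. Lemma~\ref{LeiFeng}, applied to the controllable language $\supC_{i+k_j}$, the plant $L(G_i)\|L(G_{k_j})$ and the projection $P^{i+k_j}_{k_j}$, then gives that $P^{i+k_j}_{k_j}(\supC_{i+k_j})$ is controllable with respect to $P^{i+k_j}_{k_j}(L(G_i)\|L(G_{k_j}))$ and $A_{i+k_j,u}\cap A_{k_j}=A_{k_j,u}$. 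Finally, by Lemma~\ref{lemma:Wonham} and the construction $G_{k_j}=\|_{i'=1}^{n}P_{k_j}(G_{i'})$ (one of whose factors is $P^{i+k_j}_{k_j}(L(G_i))$), we get $P^{i+k_j}_{k_j}(L(G_i)\|L(G_{k_j}))=P^{i+k_j}_{k_j}(L(G_i))\|L(G_{k_j})=L(G_{k_j})$, and the claim follows through the reduction of the first paragraph.

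I expect the main obstacle to be exactly this transfer step. The hypothesis is stated for the ``full inverse image'' plant $(P^{i+k_j}_i)^{-1}L(G_i)$, whereas $\supC_{i+k_j}$ genuinely lives in the strictly smaller plant $L(G_i)\|L(G_{k_j})$, and it need \emph{not} be controllable with respect to $(P^{i+k_j}_i)^{-1}L(G_i)$, since an uncontrollable coordinator event can lead out of $L(G_i)\|\supC_{k_j}$. Hence Lemma~\ref{LeiFeng} cannot be applied directly against the larger plant; the enlargement must be stopped exactly at $L(G_i)\|L(G_{k_j})$, and one must then verify that the observer and OCC properties survive the restriction to it, which is the content of facts (i) and (ii) above. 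The remaining steps are routine manipulations of projections together with the quoted lemmas.
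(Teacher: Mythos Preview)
Your strategy is sound and almost certainly coincides with the argument the paper defers to (the paper gives no self-contained proof here, only a pointer to the analogous centralized result in~\cite{cdc2014}). The reduction to a single index via closure of controllability under intersection of prefix-closed languages, the enlargement of the plant from $L(G_i)\|\supC_{k_j}$ to $L(G_i)\|L(G_{k_j})$ by Lemma~\ref{feng} and Lemma~\ref{lem_transC}, and the final identification $P^{i+k_j}_{k_j}\bigl(L(G_i)\|L(G_{k_j})\bigr)=L(G_{k_j})$ are all correct.

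The one genuine slip is your fact (ii). OCC is \emph{not} inherited by arbitrary sublanguages: its definition has both a universal and an existential quantifier ranging over the language (for every string reaching an observable uncontrollable event there must \emph{exist} an uncontrollable trajectory in the language with the same projection), so shrinking the language makes the existential part harder, not easier. What does hold---and what you actually need---is the much weaker statement that if $P$ is an $L$-observer and OCC for $L$, then both properties transfer to $L\cap P^{-1}(H)$ for any $H$ over the image alphabet. For OCC: if $s\in L\cap P^{-1}(H)$ and the relevant observable uncontrollable step is feasible there, then by OCC for $L$ the witnessing uncontrollable trajectory $u$ exists in $L$ with $P(su\sigma)=P(s)\sigma\in H$, hence $su\sigma\in P^{-1}(H)$ automatically. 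So the conclusion you draw for $L(G_i)\|L(G_{k_j})=L\cap (P^{i+k_j}_{k_j})^{-1}L(G_{k_j})$ is correct, but the justification should be this restriction-by-inverse-image argument, not a blanket heredity claim. With that fix, your proof goes through.
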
 
  \begin{proof}
    The proof is the same as in~\cite{cdc2014}, the only difference is that here $A_{k_j}$, $j=1, \dots ,m$, replaces centralized $A_k$, and individual groups of subsystems on the low level replace all subsystems in the centralized coordination.
  %
  %
  \end{proof}

  We point out that even without the above conditions $\|_{j=1}^m \|_{i\in I_j}\supC_{i+k_j}$  is controllable with respect to $L$, but we cannot guarantee maximal permissiveness with respect to the two-level coordination control architecture.
%

\section{Decentralized supervisory control with communication}
\label{sec:dec} 
  In this section constructive results of the top-down coordination are applied to decentralized supervisory control with communicating supervisors. To avoid any confusion we use systematically $\Sigma$ to denote alphabets in decentralized control, while notation $A$ is reserved for alphabets in coordination control. Decentralized supervisory control is now briefly recalled.
 
\subsection{Decentralized Supervisory Control Problem}
\label{sec:bdscp} 
  Decentralized supervisory control differs from modular or coordination control,
  because global system is a large automaton without a product structure.
  In decentralized supervisory control 
  sensing and actuating capabilities are distributed among local supervisors $(S_i)_{i=1}^{n}$ 
  such that each $S_i$ observes a subset $\Sigma_{o,i}\subseteq \Sigma$
  and based on its observation it can disable its controllable  events $\Sigma_{c,i}$. Projections to locally observable events are denoted by 
  $P_i: \Sigma^* \to \Sigma_{o,i}^*$. 
  We use the notation $\Sigma_c=\cup_{i=1}^n \Sigma_{c,i}$, $\Sigma_o=\cup_{i=1}^n \Sigma_{o,i}$, $\Sigma_{u}=\Sigma\setminus \Sigma_{c}$, and $\Sigma_{uo}=\Sigma\setminus \Sigma_{o}$.

  Formally, a local supervisor $S_i$ for a generator $G$ is defined as a mapping $S_i : P_i(L(G))\to \Gamma$, where $\Gamma_i=\{\gamma\subseteq \Sigma \mid \gamma\supseteq (\Sigma\setminus \Sigma_{c,i})\}$ is the set of local control patterns, and $S_i(s)$ represents the set of locally enabled events when $S_i$ observes a string $s\in \Sigma_{o,i}^*$. Then the permissive local supervisor law is $S_i(s)=(\Sigma\setminus \Sigma_{c,i}) \cup \{ a\in \Sigma_{c,i}\mid \exists s'\in K \text{ with } P_i(s')=P_i(s) \text{ and } s'a\in K\}$. The global control law $S$ is given by conjunction of local ones: for $w\in \Sigma_{o,i}^*$, $S(w) =\cap_{i=1}^n S_i(P_i(w))$,
  This is why this control architecture is
  nowadays referred to as conjunctive and permissive.
  
  \begin{definition}\label{defcoop}
   $K\subseteq L$ is {\em $C\,\&\,P$ coobservable\/} with respect to $L=L(G)$ and $(\Sigma_{o,i})_{i=1}^{n}$ if for all $s\in K$, $a\in \Sigma_c$, and $sa \in L\setminus K$, there exists $i\in \{1,2,\ldots,n\}$ such that $a\in \Sigma_{c,i}$ and $(P_i^{-1}(P_i(s))\{a\}\cap K=\emptyset$.
    \hfill\QEDopen
  \end{definition}
  
  $C\,\&\,P$ coobservability can be interpreted in the following way: if we exit from the specification by  an event $a$, then there must exists at least one local supervisor that can control this event ($a\in \Sigma_{c,i}$) and can disable $a$
  unambiguously, i.e. all lookalike (for $S_i$) strings exit the specification as well.
  Recall that $C\,\&\,P$  coobservability can be decided in polynomial time~\cite{RW95}
  in the number of states of the  specification and system (but in
  exponential time in the number of local supervisors!). 
  
  Since the  counterpart of C\,\&\,P  coobservability, called D\,\&\,A  coobservability, is not studied in this paper, C\,\&\,P coobservability will be referred to as coobservability.

  It has been proved in~\cite{RW92} that controllability and coobservability  are the necessary and sufficient conditions to achieve a specification as the resulting closed-loop language. For languages that fail to satisfy these conditions it is important to compute a controllable and coobservable sublanguage. 
  
  We first recall that decomposability is strongly related to coobservability. 
  A language $K$ is {\em decomposable\/} with respect to alphabets $(\Sigma_{i})_{i=1}^{n}$ and $L$ if 
  $K = \|_{i=1}^{n} P_{i}(K) \cap L$.
  In the special case $L=\Sigma^*$ decomposability is called {\em separability\/}~\cite{GM04}, i.e.  $K$ is {\em separable\/} with respect to alphabets $(\Sigma_{i})_{i=1}^{n}$ if $K = \|_{i=1}^{n} P_{i}(K)$.
  Now we are ready to recall that (under the event set condition that will be shown
  non-restrictive in the following subsection) separability implies coobservability.
  \begin{theorem}[\cite{KM13}]\label{thm7}
    Assume that $\Sigma_{o,i}\cap \Sigma_c\subseteq \Sigma_{c,i}$, for $i=1,2,\ldots,n$. If $K$ is separable with respect to $(\Sigma_{o,i})_{i=1}^{n}$, then $K \cap L$ is coobservable with respect to $(\Sigma_{o,i})_{i=1}^{n}$ and $L$.
    \endproof
  \end{theorem}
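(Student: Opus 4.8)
\medskip\noindent\emph{Proof idea.} The plan is to verify Definition~\ref{defcoop} for $K\cap L$ directly. First I would fix $s\in K\cap L$, a controllable event $a\in\Sigma_c$, and suppose $sa\in L\setminus(K\cap L)$; since $sa\in L$, this already gives $sa\notin K$. The goal is then to produce an index $j$ with $a\in\Sigma_{c,j}$ and $P_j^{-1}(P_j(s))\{a\}\cap(K\cap L)=\emptyset$. The only structural input is separability, which I would use in the form $K\subseteq\Sigma_o^*$ (with $\Sigma_o=\bigcup_{i=1}^n\Sigma_{o,i}$) together with: for every word $w\in\Sigma_o^*$, $w\in K$ iff $P_i(w)\in P_i(K)$ for all $i$.

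The heart of the argument is to search for an index $j$ with $a\in\Sigma_{o,j}$ and $P_j(s)\,a\notin P_j(K)$. If such a $j$ exists, the event-set hypothesis gives immediately $a\in\Sigma_{o,j}\cap\Sigma_c\subseteq\Sigma_{c,j}$, so this $j$ can control $a$; and for any $s'$ with $P_j(s')=P_j(s)$ and $s'a\in K$ one would get $P_j(s'a)=P_j(s')\,a=P_j(s)\,a\in P_j(K)$, contradicting the choice of $j$. Hence $P_j^{-1}(P_j(s))\{a\}$ misses $K$ entirely, a fortiori it misses $K\cap L$, so $j$ is the required witness; this incidentally proves the slightly stronger statement that no look-alike extension re-enters $K$ itself.

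It then remains to show that such a $j$ always exists, and I expect this to be the only genuinely delicate point. If no such $j$ existed, then $P_i(sa)\in P_i(K)$ would hold for every $i$: for those $i$ with $a\notin\Sigma_{o,i}$ because $P_i(sa)=P_i(s)$ and $s\in K$, and for those $i$ with $a\in\Sigma_{o,i}$ by the very failure of the search. If $a$ were observed by some supervisor, i.e.\ $a\in\Sigma_o$, then $sa\in\Sigma_o^*$ and separability would force $sa\in K$, contradicting $sa\notin K$; hence $a\in\Sigma\setminus\Sigma_o$, and then any $i$ with $a\in\Sigma_{c,i}$ (one exists since $a\in\Sigma_c$) works trivially, because every word of $P_i^{-1}(P_i(s))\{a\}$ ends in a letter outside $\Sigma_o$ and therefore lies outside $\Sigma_o^*\supseteq K\supseteq K\cap L$. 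Apart from this bookkeeping around globally unobservable events---which is exactly where the (non-restrictive) inclusion $\Sigma_{o,i}\cap\Sigma_c\subseteq\Sigma_{c,i}$ and the convention $K\subseteq\Sigma_o^*$ do their work---the argument is a short, direct computation with no real obstacle in sight.
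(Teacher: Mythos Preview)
The paper does not actually prove this theorem: it is quoted from~\cite{KM13} and closed with a QED marker, so there is no in-paper argument to compare against. Your direct verification of Definition~\ref{defcoop} is correct and is the natural proof. The only point worth flagging is that your use of $K\subseteq\Sigma_o^*$ relies on the paper's convention that $\|_{i=1}^n P_i(K)$ lives in $(\bigcup_i\Sigma_{o,i})^*=\Sigma_o^*$; this is indeed how the synchronous product is defined here, so separability $K=\|_{i=1}^n P_i(K)$ does force $K\subseteq\Sigma_o^*$, and your handling of the globally unobservable case $a\in\Sigma_c\setminus\Sigma_o$ is sound. The split into the two cases (some $j$ with $a\in\Sigma_{o,j}$ and $P_j(s)a\notin P_j(K)$, versus $a\notin\Sigma_o$) together with the hypothesis $\Sigma_{o,i}\cap\Sigma_c\subseteq\Sigma_{c,i}$ is exactly what the original proof in~\cite{KM13} does, so your approach coincides with the intended one.
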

    
%
%

\subsection{Construction of controllable and coobservable sublanguages based on two-level coordination control}
\label{sec:ccds}

  Now we show how the new constructive results of the top-down approach to multilevel coordination control from Section~\ref{sec:sup2cc} can be used to compute sublanguages that are by construction controllable and coobservable.

  The idea is that owing to our results of coordination control, the supremal conditionally controllable sublanguage (that is in particular controllable)  can be computed as a synchronous product of languages over alphabets enriched by communicated events from group coordinators. Hence the resulting language will be by construction not only controllable, but in view of Theorem~\ref{thm7} also coobservable. Moreover, due to the multilevel coordination, coordinator events will not be communicated among all subsystems, which was our first approach presented in~\cite{KM13} based on the centralized coordination. Roughly speaking, coordinator events from the centralized coordination are distributed to group  coordinator events, and these will only  be communicated among the subsystems belonging to the group. 
 
  Recall at this point that alphabets in decentralized control are denoted by $\Sigma$, while alphabets in coordination control are denoted by $A$.

  It follows from Theorem~\ref{construction} and its consequences that the supremal two-level conditionally-controllable sublanguage is decomposable with respect to alphabets $(A_{i+k_j}, \; j=1,\dots ,m, \; i\in I_j)$.
  
  Note that conditional decomposability with respect to alphabets $(\Sigma_{o,i})_{i=1}^{n}$ and $\Sigma_k$ such that $\cup_{i\neq j} (\Sigma_i\cap \Sigma_j)\subseteq \Sigma_k$, i.e.
  \[
      K = \|_{i=1}^{n} P_{i+k} (K),
  \]
  is nothing else but separability of $K$ with respect to
  $(\Sigma_{o,i}\cup \Sigma_k)_{i=1}^{n}$.
  An important feature of separability with respect to the alphabets
  of this form with intersection between all pairs of alphabets being equal to $\Sigma_k$
  is that it can be checked in polynomial time in the number of local agents~\cite{SCL12}.
 
  We recall that there always exists $\Sigma_k$ that makes language $K$ conditionally decomposable with respect to $(\Sigma_{i})_{i=1}^{n}$ and $\Sigma_k$~\cite{SCL12}. 
  Moreover, $K$ is conditionally decomposable if and only if there exist $M_i\subseteq \Sigma_{i+k}^*$ such that $K = \|_{i=1}^{n} M_i$.

  Consider now the setting of decentralized control with subsets of  events observable $(\Sigma_{o,i})_{i=1}^{n}$ and  controllable  $(\Sigma_{c,i})_{i=1}^{n}$ by agent (supervisor) $i$ and a specification $K\subseteq L=L(G)$. 
  
  We plunge the decentralized control problem into the coordination control problem by setting
  \[
    A_i = \Sigma_{o,i} \quad \text{ and }\quad A_{c,i} = \Sigma_{o,i}\cap \Sigma_{c,i}\,.
  \]
  For simplicity, the same notation for projection is kept: $P_i: A^* \to A_i^*$. The plant language $G$ will be over-approximated by a two-level modular plant $\|_{i=1}^{n}P_{i}(G)$, that is, by the parallel composition of projections to events observable by local control agents. We will organize the local supervisors into a two-level hierarchy. Similarly as in multilevel coordination control we will group the agents based on their interactions given by intersection of their alphabets, here observations $A_i = \Sigma_{o,i}$. The idea is to place agents with maximal shared observations to the same groups at the lowest level of the multilevel structure. Then in an ideal situation there will be no shared observations between different groups of agents, because all shared observations are realized within the low-level groups. This can be formalized by associating a square matrix with the number of shared events  observed by both and try to find after permutation  a block matrix structure such that the maximum of shared events is situated in the diagonal blocks, while off-diagonal blocks contain very small numbers (ideally zero matrices). Again, we denote $m$ low-level groups of agents by $I_j$, $j=1,\dots ,m\leq n$.

  According to the two-level top-down architecture we have to find an extension $A_k$ (a high-level coordinator alphabet) of these "high-level shared observations" $A_{sh}=\bigcup\nolimits_{k,\ell\in\{1, \dots ,m\}}^{k\not =l} (A_{I_k}\cap A_{I_\ell})$ such that $A_k\supseteq A_{sh}$ and $K = \|_{r=1}^m P_{I_r+k} (K)$. Similarly, we have to extend the shared low-level observations in groups, i.e. $A_{sh,r}=\bigcup\nolimits_{k,\ell\in I_r}^{k\neq \ell} (A_k\cap A_\ell)$, for $r=1,\dots,m$, to low-level coordinator alphabets  $A_{k,j}\supseteq A_{sh,r}$ such that $P_{I_r+k} (K) = \|_{j\in I_r} P_{j+k_r+k} (K)$, i.e. two-level conditional decomposability holds true.
 
  It is a common assumption in both modular and coordination supervisory control that shared events have the same controllability status in all components, i.e. $A_{i}\cap A_{c,j} \subseteq A_{c,i}$, for $i,j=1,2,\ldots,n$. Since  $A_i = \Sigma_{o,i}$, it is clear that this assumption is equivalent to $\Sigma_{o,i}\cap \Sigma_{c,j} \subseteq  \Sigma_{c,i}$, for $i,j=1,2,\ldots,n$. Hence, by Theorem~\ref{thm7}, separability implies coobservability and the condition stated therein is not restrictive.  More precisely we have:

  \begin{lem}\label{sharedevent_conditions}
    If $\Sigma_{o,i}\cap \Sigma_{c,j}\subseteq \Sigma_{c,i}$, for $i,j=1,2,\ldots,n$, then $(A_{i})_{i=1}^{n}$ and $(A_{c,i})_{i=1}^{n}$ defined above satisfy $A_{i}\cap A_{c,j}\subseteq A_{c,i}$, for all $i,j=1,2,\ldots,n$.
  \end{lem}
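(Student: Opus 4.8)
The plan is to prove the inclusion by a direct element chase through the definitions $A_i=\Sigma_{o,i}$ and $A_{c,i}=\Sigma_{o,i}\cap\Sigma_{c,i}$, with no machinery needed beyond elementary set algebra. Fix indices $i,j\in\{1,\dots,n\}$ and take an arbitrary event $a\in A_i\cap A_{c,j}$; the goal is to show $a\in A_{c,i}=\Sigma_{o,i}\cap\Sigma_{c,i}$.

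First I would unfold membership: $a\in A_i$ means $a\in\Sigma_{o,i}$, and $a\in A_{c,j}=\Sigma_{o,j}\cap\Sigma_{c,j}$ gives in particular $a\in\Sigma_{c,j}$. Hence $a\in\Sigma_{o,i}\cap\Sigma_{c,j}$. Now I would invoke the hypothesis $\Sigma_{o,i}\cap\Sigma_{c,j}\subseteq\Sigma_{c,i}$ to conclude $a\in\Sigma_{c,i}$. Combining this with $a\in\Sigma_{o,i}$ already obtained, I get $a\in\Sigma_{o,i}\cap\Sigma_{c,i}=A_{c,i}$, which is exactly what was to be shown. Since $a$ was arbitrary, $A_i\cap A_{c,j}\subseteq A_{c,i}$, and since $i,j$ were arbitrary this holds for all $i,j=1,\dots,n$.

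There is essentially no obstacle here: the statement is a definitional consequence, and the only thing to be careful about is not conflating the three relevant alphabets ($\Sigma_{o,i}$ observable, $\Sigma_{c,i}$ controllable, and their intersection $A_{c,i}$), so that the hypothesis on $\Sigma$'s is applied to the right pair of sets. No projections or language-level arguments enter, so the whole argument is a single short paragraph when written out in full.
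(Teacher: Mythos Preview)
Your proof is correct and matches the paper's own argument essentially line for line; the only cosmetic difference is that the paper phrases it as a chain of set inclusions $A_i\cap A_{c,j}=\Sigma_{o,i}\cap(\Sigma_{o,j}\cap\Sigma_{c,j})\subseteq\Sigma_{o,i}\cap\Sigma_{c,j}\subseteq\Sigma_{c,i}$ together with $A_i\cap A_{c,j}\subseteq\Sigma_{o,i}$, while you do the equivalent element chase.
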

  \begin{proof}
    The assumption implies that for all $i,j=1,2,\ldots,n$, $A_{i}\cap A_{c,j} = \Sigma_{o,i}\cap  (\Sigma_{o,j}\cap \Sigma_{c,j})\subseteq \Sigma_{o,i}\cap \Sigma_{c,j} \subseteq \Sigma_{c,i}$ and, trivially, $A_{i}\cap A_{c,j} = \Sigma_{o,i}\cap  (\Sigma_{o,j}\cap \Sigma_{c,j}) \subseteq \Sigma_{o,i}$, hence $A_{i}\cap A_{c,j} \subseteq \Sigma_{c,i}\cap \Sigma_{o,i} = A_{c,i}$, which was to be shown.
  \end{proof}

  Once $A_k$ and $A_{k_j}$, $j=1,\dots ,m$, are found such that two-level conditional decomposability holds we compute languages $L_{k_j} = L(G_{k_j})$, $\supC_{k_j}$, and $\supC_{i+k_j}$ as in the previous section, cf. formula (\ref{sup2cc}), where $L(G_i)$ are replaced by $P_i(L)$. Namely, for all $j=1,\dots ,m$, $L_{k_j} = \|_{i=1}^n P_{k_j}(P_i(L))$, $\supC_{k_j}=\supC(P_{k_j}(K),L_{k_j}, A_{k_j,u})$, and for all $i\in I_j$, $\supC_{i+k_j}=\supC(P_{i+k_j}(K), P_i(L) \| \supC_{k_j}, A_{i+k_j,u})$.

  \begin{theorem} \label{main}
    Let $K\subseteq L$ be languages, and let $K$ be two-level conditionally decomposable with respect to $(A_{i})_{i=1}^{n}$, $A_{k,j}$, and $A_k$.  Then $\parallel_{j=1}^m \parallel_{i\in I_j}\supC_{i+k_j}$ is a sublanguage of $K$ controllable with respect to $L$ and $A_u$, and coobservable with respect to $L$ and $(A_{i+k_j})_{j=1,\dots,m; \; i\in I_j}$.  
    
    If $\|_{j=1}^m \cap_{i\in I_j} P^{i+k_j}_{k_j} (\supC_{i+k_j})$ is controllable with respect to $L(G_{k_j})$ and $A_{k_j,u}$, and for all $j=1,\dots ,m$:
$P_k^{k_j}$ is an $L_{k_j}$-observer and OCC for $L_{k_j}$, then $\|_{j=1}^m \|_{i\in I_j}\supC_{i+k_j}=\suptwoCC(K, L, A_{i+k_j})$ is the largest controllable and coobservable language we can obtain using the two-level coordination.
  \end{theorem}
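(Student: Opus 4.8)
The plan is to prove the two assertions separately, the first resting on Lemma~\ref{feng} together with Theorem~\ref{thm7}, and the second on Theorem~\ref{construction}. Throughout, write $M=\parallel_{j=1}^m\parallel_{i\in I_j}\supC_{i+k_j}$. The first step is to record that $M\subseteq K$: since $K$ is two-level conditionally decomposable and, by the simplification of Remark~\ref{simple}, $A_k\subseteq A_{k_j}$, two-level conditional decomposability reduces to $K=\parallel_{j=1}^m\parallel_{i\in I_j}P_{i+k_j}(K)$ (each $P_{j+k_r+k}$ projecting to $A_j\cup A_{k_r}$), and because $\supC_{i+k_j}\subseteq P_{i+k_j}(K)$ by the very definition of a supremal controllable sublanguage, synchronizing over all pairs $(i,j)$ gives $M\subseteq K$.

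Next I would establish controllability of $M$ with respect to $L$ and $A_u$. Each $\supC_{i+k_j}$ is, by construction, controllable with respect to $P_i(L)\parallel\supC_{k_j}$ and $A_{i+k_j,u}$, so Lemma~\ref{feng} applied to the family $\{\supC_{i+k_j}\}$ makes $M$ controllable with respect to $\parallel_{j}\parallel_{i\in I_j}(P_i(L)\parallel\supC_{k_j})$ and $\bigcup_{j}\bigcup_{i\in I_j}A_{i+k_j,u}=A_u$ (using that the groups $I_j$ partition the subsystems, so the local alphabets cover $A$). Dropping the $\supC_{k_j}$ factors enlarges the reference language to the over-approximation $\parallel_{i=1}^n P_i(L)$ of the plant, and since controllability is inherited by every sublanguage of the reference language and $L$ is contained in that over-approximation, $M$ is controllable with respect to $L$ and $A_u$; this is precisely the observation recorded after Theorem~\ref{construction}. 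For coobservability, I would use that $M$ is, by its form, a synchronous product of languages over the alphabets $A_{i+k_j}$, hence separable with respect to $(A_{i+k_j})_{j=1,\dots,m;\,i\in I_j}$; as each subsystem lies in a unique group, this is a family of one alphabet per local agent, and the event-set hypothesis of Theorem~\ref{thm7} holds for these enriched alphabets by the computation in Lemma~\ref{sharedevent_conditions} (with the controllable set of agent $i$ taken to be $A_{i+k_j}\cap\Sigma_c$). Theorem~\ref{thm7} then yields that $M=M\cap L$ is coobservable with respect to $(A_{i+k_j})$ and $L$.

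For the optimality assertion, the additional hypotheses are exactly those of Theorem~\ref{construction}, which gives $M=\suptwoCC(K,L,A_{i+k_j})$; it then remains to argue $M$ is the largest controllable and coobservable language obtainable by two-level coordination. By Theorem~\ref{th:2-controlsynthesissafety} the languages realizable by this architecture are the two-level conditionally controllable sublanguages of $K$, each of which is controllable with respect to $L$ (as above) and, being two-level conditionally decomposable, separable with respect to $(A_{i+k_j})$ and hence coobservable by Theorem~\ref{thm7}; since $\suptwoCC(K,L,A_{i+k_j})$ is by definition the supremum of this class and is itself of the required product form, it is the desired maximum. I expect the main obstacle to be the alphabet bookkeeping in the controllability step --- passing from controllability with respect to the over-approximation $\parallel_i P_i(L)$ (a language over $\Sigma_o$) down to controllability with respect to the genuine plant $L$ --- together with checking that the enriched alphabets $A_{i+k_j}$ inherit the shared-controllable-event condition required by Theorem~\ref{thm7}; the coobservability step itself is short once separability of $M$ is noticed, so the technical weight really lies in aligning the hypotheses of Theorems~\ref{thm7} and~\ref{construction} with the decentralized setting.
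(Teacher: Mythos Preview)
Your overall strategy---separability for coobservability via Theorem~\ref{thm7}, Lemma~\ref{feng} for controllability, and Theorem~\ref{construction} for the optimality clause---coincides with the paper's. The coobservability argument and the reduction $M\cap L=M$ via $M\subseteq K\subseteq L$ are exactly what the paper does, and your treatment of the second paragraph is fine.

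There is, however, a genuine gap in your controllability step. You obtain (correctly, via Lemma~\ref{feng}) that $M$ is controllable with respect to $N_1=\parallel_{j}\parallel_{i\in I_j}\bigl(P_i(L)\parallel\supC_{k_j}\bigr)$, and you then write that ``dropping the $\supC_{k_j}$ factors enlarges the reference language to $\parallel_{i=1}^n P_i(L)$''. That enlargement $N_1\subseteq N_2=\parallel_i P_i(L)$ is true, but controllability is \emph{not} preserved when the reference language grows; it is only preserved when it shrinks. So from $M$ controllable with respect to $N_1$ you cannot pass directly to $M$ controllable with respect to $N_2$. The paper closes this gap with an extra ingredient you omit: since each $\supC_{k_j}$ is controllable with respect to $L_{k_j}=L(G_{k_j})$, a second application of Lemma~\ref{feng} (composing with the trivially controllable $P_i(L)$) shows that $N_1$ is controllable with respect to $\parallel_j\parallel_{i\in I_j}\bigl(P_i(L)\parallel L_{k_j}\bigr)=\parallel_i P_i(L)$ (the simplification uses Lemma~\ref{obvious}), and then transitivity of controllability (Lemma~\ref{lem_transC}) yields $M$ controllable with respect to $N_2$. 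Only \emph{after} that does the downward step $L\subseteq N_2$ give controllability with respect to $L$. Note that the obstacle you flag at the end is the wrong one: passing from $N_2$ down to $L$ is immediate; the delicate direction is passing \emph{up} from $N_1$ to $N_2$, and that requires the controllability of $\supC_{k_j}$ with respect to $L_{k_j}$ together with Lemma~\ref{lem_transC}.
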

  \begin{proof}
    First of all, $M=\|_{j=1}^m \|_{i\in I_j} \supC(P_{i+k_j}(K)$, $P_i(L) \| \supC_{k_j}, A_{i+k_j,u})$ is separable with respect to alphabets $(A_i\cup A_{k_j})_{j=1,\dots,m; \; i\in I_j}$ as a composition of languages over these alphabets. Hence, by Theorem \ref{thm7}, $M \cap L$ is coobservable with respect to the same alphabets and $L$.  Note that $M \cap L=M$, because $M\subseteq K$ due to conditional decomposability of $K$ and $K \subseteq L$, whence $M\subseteq L$.

    $M$ is also controllable with respect to $L$. This is because, by Lemma~\ref{feng}, $M=\|_{j=1}^m \|_{i\in I_j}\supC_{i+k_j}$ is always controllable with respect to $\|_{j=1}^m \|_{i\in I_j} P_i(L) \| \supC_{k_j}$, and since $\supC_{k_j}$ is controllable with respect to $L(G_{k_j})$, we get by transitivity of controllability that $M$ is controllable with respect to $\|_{j=1}^m \|_{i\in I_j} P_i(L) \| L_{k_j} = \|_{i=1}^n P_{i}(L) \|_{j=1}^m \|_{i\in I_j} P_i(L) = \|_{i=1}^n P_{i}(L)$, because from Lemma~\ref{obvious} we have $P_{i}(L) \parallel P_{k_j}(P_i(L)) = P_{i}(L)$. Since $L\subseteq \|_{i=1}^n P_{i}(L)$, $M$ is also controllable with respect to $L$. The rest of the claim is a consequence of Theorem \ref{construction} and the above described translation of the decentralized control problem into a coordination control problem. 
  \end{proof}
  
  Note that according to the above results there are stronger sufficient conditions that are more suitable for verification, namely that for $j=1, \dots ,m$ and $i\in I_j$, either $P^{i+k_j}_{k_j}(\supC_{i+k_j})=\supC_{k_j}$, or $P^{i+k_j}_{k_j}$ are $(P^{i+k_j}_i)^{-1}P_i(L)$-observers and OCC for $(P^{i+k_j}_i)^{-1}P_i(L)$. The first condition has the advantage that if it holds, our results can be extended to the non-prefix-closed case. The second condition has the advantage that there are known algorithms to extend the local alphabets such that corresponding projections become OCC and satisfy the observer property. 
  Finally, let us point out that without any assumption (except two-level conditional decomposability) $M$ might be a too small language, because without the above additional conditions we cannot guarantee the optimality with respect to the coordination control ($\suptwoCC(K, L, A_{i+k_j})$) and the optimality is lost twice (it is potentially lost due to over-approximation of $L$ by $\|_{i=1}^n P_{i}(L)$ unless $L$ is decomposable).

\subsection{Example}
  Let the languages $K$ and $L$ be given by generators on Figs.~\ref{figK} and~\ref{figL}, respectively. The alphabets of local agents are $\Sigma_{o,1}=\{a,b,u_1,u\}$, $\Sigma_{o,2}=\{a,b,u_2,u\}$, $\Sigma_{o,3}=\{v,b,v_1,b_1\}$, $\Sigma_{o,4}=\{v,b,v_2,b_2\}$, $\Sigma_{c,1}=\Sigma_{c,2}=\{a,b\}$, $\Sigma_{c,3}=\{v,v_1,b_1\}$, $\Sigma_{c,4}=\{v,v_2,b_2\}$, $\Sigma_u=\{b,u,u_1,u_2\}$, and $\Sigma_c=\Sigma\setminus \Sigma_u$. 

  Note that $K$ is not controllable with respect to $L$, because e.g. $v_2v_1b\in K\Sigma_u \cap L$, but $v_2v_1b\not\in K$. On the other hand, $K$ is coobservable with respect to $L$ and $\Sigma_{o,i}$, $i=1,2,3,4$. However, $N=\supC(K,L,\Sigma_u)$ is not coobservable with respect to $L$ and $\Sigma_{o,i}$, $i=1,2,3,4$, anymore. This is because both $v_1v_2\in L$ and $v_2v_1\in L$, $v_1v_2\in N$, $v_2\in N$, while $v_2v_1\not\in N$. 
  
  It is clear that at least one of agents 3 and 4 has to observe both $v_1$ and $v_2$ in order to issue a correct control decision. This means that it is not clear how to compute a sublanguage that is at the same time controllable and coobservable. Also notice that if $b$ were controllable, $K$ would be controllable, but still not coobservable. We now use our two-level coordination control approach.
  \begin{figure}
    \centering
    \includegraphics[scale=.22]{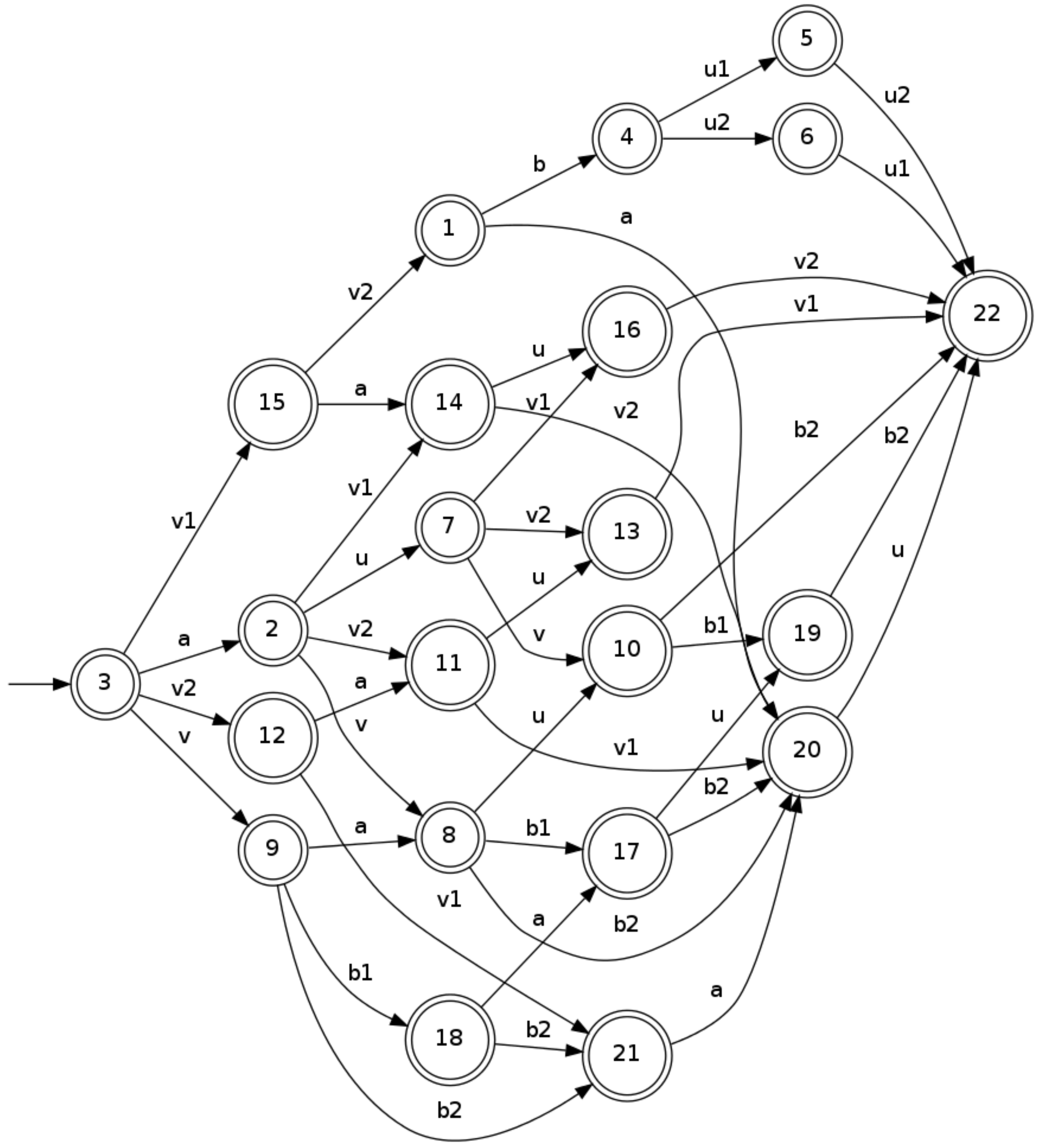}
    \caption{Specification $K$}
    \label{figK}
  \end{figure}
  \begin{figure}
    \centering
    \includegraphics[scale=.22]{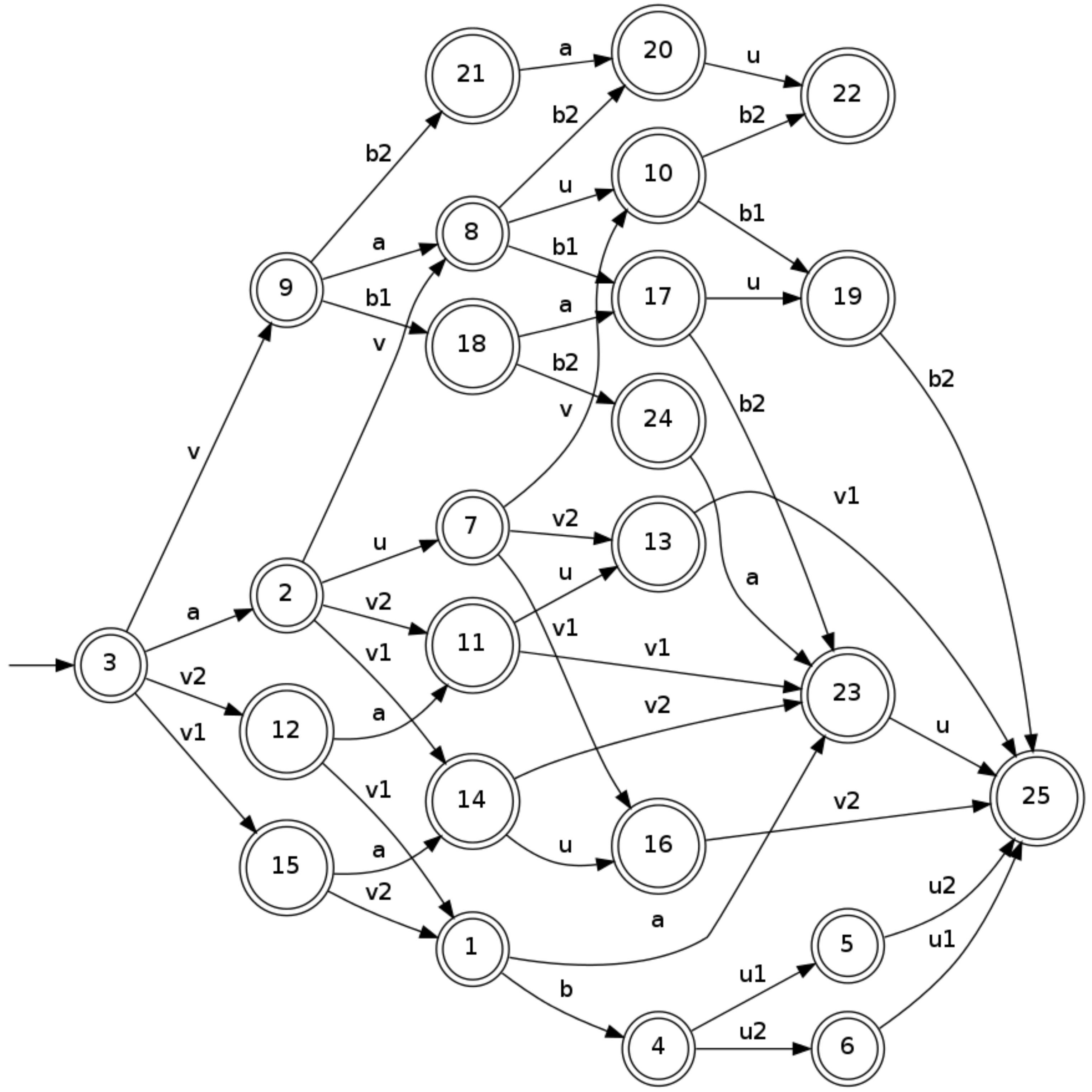}
    \caption{Plant $L$}
    \label{figL}
  \end{figure}
  
  First, we project the plant to the alphabets of observable events, cf. Fig.~\ref{figA}. The inclusion $L\subseteq \|_{i=1}^n P_{i}(L)$ is strict. We set $A_i=\Sigma_{o,i}$ for $i=1,2,3,4$. Now we have to extend $A_{sh}= (A_{I_1}\cap A_{I_2})=(A_1\cup A_2)\cap (A_3\cup A_4)=\{b\}$ such that for $A_k\supseteq A_{sh}$, $K=P_{1+2+k}(K)\| P_{3+4+k}(K)$. It turns out that no extension of $A_{sh}$ is needed, i.e. $A_k=\{b\}$.
  
  We now need to find low-level coordination alphabets $A_{k_1}$ and $A_{k_2}$ such that  conditions on the low-level for two-level conditionally decomposability hold true. Since $P_{1+2+k}(K)=P_1(K)\|P_2(K)$, there is no need to extend $A_{sh,1}=A_1 \cap A_2=\{a,u,b\}$ and take $A_{k_1}=A_{sh,1}$. Therefore, $P_{1+2+k}(K)=P_{1+k_1}(K)\|P_{2+k_1}(K)$,  with $A_{k_1}=\{a,u,b\}$. In particular, coordinator $L_{k_1}$ is not needed. Secondly, $P_{3+4+k}(K)$ is not decomposable with respect to  $A_{3+k}$ and $A_{4+k}$. Hence, we have to extend $A_k$ by $A_{k_2}$ to make the equation $P_{3+4+k}(K)=P_{3+k_2}(K)\|P_{4+k_2}(K)$ hold true. We choose $A_{k_2}=\{v_1, b_2, v, b\}$ and it can be checked that $P_{3+4+k}(K)$ is conditionally decomposable with respect to  alphabets $A_{3}$, $A_4$, and $A_{k_2}$. The corresponding coordinator for the second group is $L_{k_2}=\|_{i=1}^n P_i(L)=\{\eps,v, vb_2,v_1, v_1b\}$.
 
  Now starts the actual computation of a sublanguage of $K$ that is both controllable with respect to $L$ and $A_u$ and coobservable with respect to $L$ and observable alphabets $A_{1+k_1,o}$, $A_{2+k_1,o}$, $A_{3+k_2,o}$, $A_{4+k_2,o}$. Note that $K$ is not two-level conditionally controllable, hence the constructive procedure is needed. In fact, $P_{4+k_2}(K)$ given in Fig.~\ref{P3+k_2K} is not controllable with respect to $P_4(L) \| P_{k_2}(K)$ given in Fig.~\ref{L4Pk_2K}. Therefore, corresponding supervisor $S_4$ for $P_4(L)$ is not simply given by $P_{4+k_2}(K)$, but has to be computed. Its language is given by $\supC_{4+k_2}=\supC(P_{4+k_2}(K),P_4(L)\| \supC_{k_2}, A_{u,4+k_2})$. First we compute the supervisor $\supC_{k_2}$ for coordinator $L_{k_2}$. We have $\supC_{k_2}=\supC(P_{k_2}(K),L_{k_2}, A_{u,k_2})=P_{k_2}(K)=L_{k_2}$ computed above. Thus, $\supC_{4+k_2}=\{\overline{v_1,v_2,b,v_2,vb_2}\}$ is given by Fig.~\ref{L4Pk_2K}. This supervisor simply disables $v_1$ after $v_2$ has occurred. Similarly, we have to derive the supervisor $\supC_{3+k_2}$ for $P_3(L)$. We note that  $P_{3+k_2}(K)$, given in Fig.~\ref{P3+k_2K}, is controllable with respect to $P_3(L) \| P_{k_2}(K)$. Thus, $\supC_{3+k_2}=P_{3+k_2}(K)$. Concerning the first group of agents, no control action is required for supervisors  $\supC_{1+k_1}$ and  $\supC_{2+k_1}$, because $P_{1+2}(K)=P_1(L)\|P_2(L)$. Moreover, $\supC_{1+k_1}=P_{1+k_1}(K)=P_1(K)=P_1(L)$ and $\supC_{2+k_1}=P_{2+k_1}(K)=P_2(K)=P_2(L)$.

  \begin{figure}
    \centering
    \includegraphics[scale=.75]{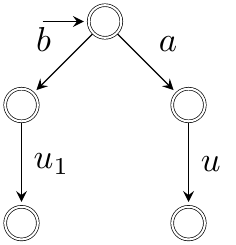}
    \includegraphics[scale=.75]{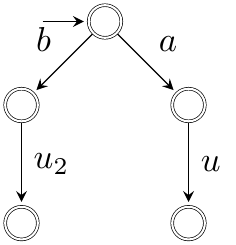}
    \includegraphics[scale=.75]{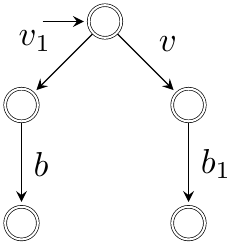}
    \includegraphics[scale=.75]{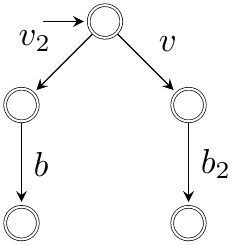}
    \caption{Projections $P_1(L),\dots,P_4(L)$}
    \label{figA}
  \end{figure}

Since all conditions in Theorem~\ref{construction} are satisfied, 
$\supC_{1+k_1} \| \supC_{2+k_1} \| \supC_{3+k_2} \| \supC_{4+k_2} = \suptwoCC(K, L, A_{i+k_j})$. In particular, it is controllable with respect to $L$ and $A_u$ and coobservable with respect to $L$ and $A_{1+k_1,o}$, $A_{2+k_1,o}$, $A_{3+k_2,o}$, $A_{4+k_2,o}$.
  \hfill$\triangleleft$
  
  \begin{figure}
    \centering
    \includegraphics[scale=.8]{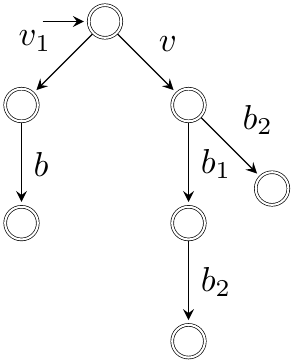} 
    \includegraphics[scale=.8]{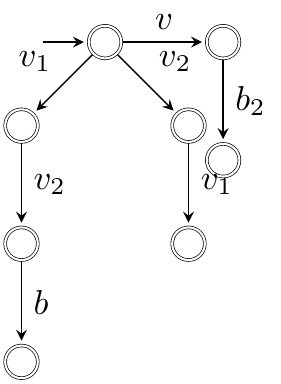} 
    \caption{$P_{3+k_2}(K)$ and $P_{4+k+k_2}(K)$}
    \label{P3+k_2K}
  \end{figure} 
  
  \begin{figure}
    \centering
    \includegraphics[scale=.8]{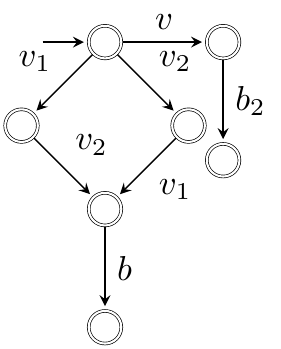} 
    \includegraphics[scale=.8]{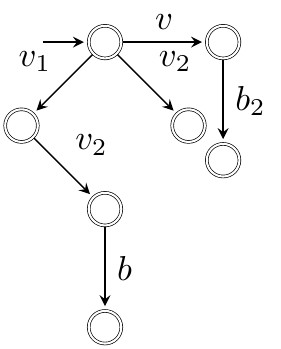} 
    \caption{ $L_4 \| P_{k_2}(K)$ and $\supC_{4+k_2}$}
    \label{L4Pk_2K}
  \end{figure}

\section{Concluding remarks}\label{conclusion}
  We have extended existential results of multilevel coordination control to constructive results with a procedure to compute the supremal sublanguage achievable in the two-level architecture. The constructive results have been applied to decentralized supervisory control with communicating supervisors, where local supervisors communicate with each other via a coordinator of the group.
 

\section{Acknowledgments}
  The authors would like to thank St\'ephane Lafortune and Feng Lin for a very fruitful discussion during their visit at the University of Michigan and Wayne State University. 
  
  The research was supported by the M\v{S}MT grant LH13012 (MUSIC), and by RVO: 67985840. 
  
\bibliographystyle{IEEEtranS}
\bibliography{multimost}

\end{document}